\newtheorem{thm}{Theorem}[section]
\newtheorem{intro-thm}{Theorem}[section]
\newtheorem{prop}[thm]{Proposition}
\newtheorem{cor}[thm]{Corollary}
\newtheorem{intro-cor}[intro-thm]{Corollary}
\newtheorem{lem}[thm]{Lemma}
\providecommand{\ZZ}{\mathds Z}
\providecommand{\NN}{\ZZ_{\geq 0}}
\providecommand{\field}{K}
\providecommand{\hhh}{[| h |]}
\providecommand{\PSR}{\field \hhh}
\providecommand{\hzero}[1]{#1_{h=0}}
\providecommand{\alie}{\mathfrak a}
\providecommand{\blie}{\mathfrak b}
\providecommand{\glie}{\mathfrak g}
\providecommand{\hlie}{\mathfrak h}
\providecommand{\slt}{\mathfrak{sl}_2}
\providecommand{\End}{\mathrm{End}}
\providecommand{\Rep}{\mathrm{Rep}}
\providecommand{\rep}{\mathrm{rep}}
\providecommand{\Ncol}{N}
\providecommand{\qcol}{N_q}
\providecommand{\U}{U(\slt)}
\providecommand{\Uh}[1][\psi]{U_h(#1)}
\providecommand{\Uq}{U_h(\slt)}
\providecommand{\Ua}{U(\alie)}
\providecommand{\Uha}{U_h(\alie)}
\providecommand{\Verma}[1][n]{M(#1)}
\providecommand{\Vermah}[1][n,\psi]{M_h(#1)}
\providecommand{\Simple}[1][n]{L(#1)}
\providecommand{\Coeff}[1][d]{\mathbb X^{#1}}
\providecommand{\SCoeff}[1][d]{\mathbb X_\infty^{#1}}
\providecommand{\VCoeff}[1][d]{\mathbb X_V^{#1}}
\title[Coloured Kac-Moody algebras, Part I]{Coloured Kac-Moody algebras, Part I}
\author{Alexandre Bouayad}
\begin{document}
\maketitle

\begin{abstract}
We introduce a parametrization of formal deformations of Verma modules of $\slt$. A point in the moduli space is called a colouring. We prove that for each colouring $\psi$ satisfying a regularity condition, there is a formal deformation $\Uh$ of $\U$ acting on the deformed Verma modules. We retrieve in particular the quantum algebra $\Uq$ from a colouring by $q$-numbers. More generally, we establish that regular colourings parametrize a broad family of formal deformations of the Chevalley-Serre presentation of $\U$. The present paper is the first of a series aimed to lay the foundations of a new approach to deformations of Kac-Moody algebras and of their representations. We will employ in a forthcoming paper coloured Kac-Moody algebras to give a positive answer to E. Frenkel and D. Hernandez's conjectures on Langlands duality in quantum group theory.
\end{abstract}

% ---------- Head of the document ---------- %

% ---------- Body of the document ---------- %

\section{Introduction}
\label{Intro}
\subsection{Deformation by Tannaka duality}
The Lie algebra $\slt$ formed by 2-by-2 matrices with zero trace is the easiest example of a semisimple Lie algebra, or more generally of a Kac-Moody algebra. The Chevalley-Serre presentation \cite{Serre} of $\slt$ consists of the Chevalley generators $H,X^-,X^+$, and of the relations
\begin{subequations}
\label{eq:Intro/Tannaka/slt-relations}
	\begin{align}
		\label{eq:Intro/Tannaka/slt-non-deformed-relations}
		[H, X^\pm] \, &= \, \pm 2 X^\pm, \\
		[X^-, X^+] \, &= \, H .
	\end{align}
\end{subequations}\par
We present in this paper a new approach, both elementary and systematic, to deformations of the universal enveloping algebra $\U$, over a ground field $\field$ of characteristic zero. Deformations here are formal, i.e.\ they are considered over the power series ring $\PSR$. We shall give a precision. It follows from a cohomological rigidity criterion of M. Gerstenhaber \cite{Gerstenhaber} that formal deformations of the structure of associative algebra of $\U$ are all trivial, i.e.\ they are conjugate to the constant formal deformation. In this paper though, we are interested in deforming a slightly richer structure, which consists of the algebra $\U$, together with the Chevalley generators. In other words, when considering a formal deformation of $\U$, we want to specify within it a deformation of the generators $H,X^-,X^+$. Equivalently, we may say that we are looking at formal deformations of the Chevalley-Serre presentation \eqref{eq:Intro/Tannaka/slt-relations} of $\U$.\par
Representations of $\slt$ carry all the information of the algebra $\U$, in the sense that $\U$ can be reconstructed by Tannaka duality from the category $\Rep(\slt)$ of representations of $\slt$. More specifically, $\U$ can be defined as the algebra of endomorphisms (namely, the natural transformations) of the forgetful functor from $\Rep(\slt)$ to the category of vector spaces.\par
We propose to construct formal deformations of $\U$ via Tannaka duality. In our view, the category $\Rep(\slt)$ would be too large to be deformed in one go. We need to look for a more modest subcategory to start with. One first candidate that comes easily in mind is the subcategory $\rep(\slt)$ of finite-dimensional representations. On the one hand, this subcategory is rich enough to distinguish by Tannaka duality elements in $\U$: an element in $\U$ is zero if and only if it acts by zero on every finite-dimensional representation of $\slt$. On the other hand, the category $\rep(\slt$) is notably elementary: all objects are completely reducible and finite-dimensional irreducible representations of $\slt$ are classified by their dimensions. There is however a slightly larger category which appears more suited to our purpose. This category is generated by Verma modules, in a sense which we will make precise in a next paper. Let us note that we consider here only integral Verma modules, that is to say Verma modules for which the action of $H$ has integral eigenvalues. One reason to prefer Verma modules rather than finite-dimensional representations is that the former are all equal when forgetting the action of $\slt$ -- they share the same underlying vector space. This makes deformation and Tannaka duality easier to deal with. Another reason is the definition itself of Verma modules of $\slt$ (namely, they are the representations of $\slt$ induced by the one-dimensional representations of a Borel subalgebra); we will show in a next paper that any deformation of Verma modules of $\slt$ lead naturally to a deformation of the whole category $\Rep(\slt)$.

\subsection{Summary of the main results}
\label{Intro/Results}
\begin{defi}
	A colouring is a sequence $\psi = (\psi^k(n))_{k \geq 1}$ of formal power series in $\PSR$, whose values depend on $n \in \ZZ$ and verify
	\begin{enumerate}
		\item[(C1)] $\psi^k(n) = k(n-k+1) \mod h$, for all $k,n$,
		\item[(C2)] $\psi^{n+1}(n) = 0$, for all $n \geq 0$,
		\item[(C3)] $\psi^{n+k+1}(n) = \psi^k(-n-2)$, for all $k \geq 1$ and all $n \geq 0$.
	\end{enumerate}
\end{defi}
For $n \in \ZZ$ we denote by $\Verma$ the integral Verma module of $\slt$ of highest weight $n$. When forgetting the action of $X^+$, the integral Verma modules of $\slt$ become representations of the Borel subalgebra $\blie$ spanned by $H$ and $X^-$. The action of $X^+$ can be retrieved from the \emph{natural colouring} $\Ncol$, defined by $\Ncol^k(n) = k(n-k+1)$. In view of axiom (C1), colourings can thus be considered as formal deformations of the action of $X^+$ on the integral Verma modules of $\slt$.
\begin{defi}
	We denote by $\Vermah$ the $\PSR$-module $\Verma \hhh$, endowed with the constant deformation of the action of $\blie$ on $\Verma$, together with the deformation of the action of $X^+$, given by the colouring $\psi$. 
\end{defi}
Here is where Tannaka duality comes into the picture.
\begin{defi}
	We denote by $\Uh$ the $\PSR$-algebra generated by $H,X^-,X^+$ and subject to the relations satisfied in every representation $\Vermah$.
\end{defi}
For the reader who may find unclear why this definition involves Tannaka duality, let us mention that there is a category built directly from the representations $\Vermah$ and whose Tannaka dual algebra is canonically isomorphic to $\Uh$. Details will appear in a next paper.\par
We prove that $\Uh$ deforms the algebra $\U$, provided that the colouring $\psi$ satisfies a regularity condition; we call it then a \emph{coloured Kac-Moody algebra}.
\begin{intro-thm}
	The algebra $\Uh$ is a formal deformation of the algebra $\U$ if and only if the colouring $\psi$ is regular, i.e.\ $\psi^k(n) \in \field[k,n] \hhh$.
\end{intro-thm}
A coloured Kac-Moody algebra defines not only a formal deformation of the algebra $\U$, but also a formal deformation of the Chevalley generators of $\U$. As a result, it defines unambiguously -- once we have fixed a basis of $\U$, e.g.\ the PBW basis formed by the monomials $(X^-)^a (X^+)^b H^c$ ($a,b,c \geq 0$) -- a formal deformation of the Chevalley-Serre presentation of $\U$.
\begin{intro-thm}
	For $\psi$ regular, the $\PSR$-algebra $\Uh$ is generated by $H,X^-,X^+$, and subject to the relations
	\begin{align*}
		[H, X^\pm] \, &= \, \pm 2 X^\pm, \\
		X^+X^- \, &= \, \sum_{a = 0}^\infty (X^-)^a (X^+)^a \xi^a(H),
	\end{align*}
	where the series $\xi^a(H) \in \field[H] \hhh$ form the regular solution of an infinite-dimensional linear equation parametrized by $\psi$ (see section \ref{Eqh}).
\end{intro-thm}
Let $\Ua$ be the $\field$-algebra generated by $H,X^-,X^+$ and subject to the relations \eqref{eq:Intro/Tannaka/slt-non-deformed-relations}. There is a canonical homomorphism from $\Ua$ to $\U$; we say that $\U$ is an $\alie$-algebra. Relations \eqref{eq:Intro/Tannaka/slt-non-deformed-relations} hold in $\Uh$ for every colouring $\psi$, i.e.\ $\Uh$ is a $\Uha$-algebra, where $\Uha$ designates the $\PSR$-algebra $\Ua \hhh$. We may then regard a coloured Kac-Moody algebra $\Uh$ as a formal deformation of the structure of $\alie$-algebra of $\U$.\par
To any symmetrizable Kac-Moody algebra $\glie$, V. Drinfel'd \cite{Drinfeld1} and M. Jimbo \cite{Jimbo} associated a formal deformation $U_h(\glie)$ of the universal enveloping algebra $U(\glie)$.\footnote{The quantum algebra $\Uq$ first appeared in works of P. Kulish and E. Sklyanin; see \cite{Kulish-Sklyanin}.} We show in this paper that $\Uq$ arises from the \emph{$q$-colouring} $\qcol$, which is defined by replacing natural numbers with $q$-numbers in the natural colouring $\Ncol$.
\begin{intro-thm}
	The quantum algebra $\Uq$ is isomorphic as a $\Uha$-algebra to the coloured Kac-Moody algebra $\Uh[\qcol]$.
\end{intro-thm} 
It has been proved by V. Drinfel'd \cite{Drinfeld2} that for $\glie$ semisimple, $U_h(\glie)$ is a $\hlie$-trivial formal deformation of $U(\glie)$, i.e.\ there exists an equivalence of formal deformation between $U(\glie) \hhh$ and $U_h(\glie)$ fixing the Cartan subalgebra $\hlie$ of $\glie$. We establish in the present paper that regular colourings classify all $\hlie$-trivial formal deformations of the structure of $\alie$-algebra of $\U$.
\begin{intro-thm}
	The map $\psi \mapsto \Uh$ is a bijection between colourings and isomorphism classes of $\hlie$-trivial deformations of the $\alie$-algebra $\U$.
\end{intro-thm}
As a corollary, we obtain a new\footnote{To the best of the author's knowledge.} rigidity result for $\U$.
\begin{intro-cor}
	A $\hlie$-trivial formal deformation $A$ of the $\alie$-algebra $\U$ is also $\blie$-trivial, i.e.\ there exists an equivalence of formal deformation between $\U \hhh$ and $A$ fixing both $H$ and $X^-$, and $A$ moreover admits only one such equivalence.
\end{intro-cor}

\subsection{Coloured Kac-Moody algebras}
The present paper is the first of a series aimed to lay the foundations of a new approach to deformations of Kac-Moody algebras, and of their representations. We present here results in the rank one case, focusing on one-parameter formal deformations of the $\alie$-algebra structure of $\U$. We will investigate in a next paper formal deformations of the structure of Hopf algebra of $\U$. It will be proved that regular (di)colourings provide a classification of formal deformations of the Hopf algebra $\U$ (together with formal deformations of the Chevalley generators). More generally, we will show that there is a natural group action on the set of colourings and that the resulting action groupoid is equivalent to the groupoid formed by formal deformations of the Hopf algebra $\U$. These results will be generalized in subsequent papers from $\slt$ to any symmetrizable Kac-Moody algebra $\glie$. Let us precise that we won't be concerned with all the deformations of the Hopf algebra $U(\glie)$, as we will restrict ourselves to those deformations which preserve the grading of $U(\glie)$ by the weight lattice of $\glie$.\par
Coloured Kac-Moody algebras are defined by Tannaka duality. In a next paper, we will explain how a colouring $\psi$ induces in an elementary way a closed monoidal category $\Rep (\glie,\psi)$, and we will show that $\Rep (\glie,\psi)$ is a deformation of the category of all representations of $\glie$. The coloured Kac-Moody algebra $U(\glie,\psi)$ will be defined as the Hopf algebra corresponding by Tannaka duality to the category $\Rep (\glie,\psi)$. Let us note that whereas the construction of the category $\Rep (\glie,\psi)$ is aimed to be as elementary as possible, the coloured Kac-Moody algebra $U(\glie,\psi)$ itself may be in general difficult to describe explicitly (consider for example the Chevalley-Serre presentation of $\Uh$; see theorem \ref{thm:CKM/Generators-and-relations}).\par
Coloured Kac-Moody algebras should be understood as multi-parameters deformations of usual Kac-Moody algebras, with as many deformation parameters as there are degrees of freedom in the choice of a colouring. We will show in a next paper that all constructions and results obtained over the power series ring $\PSR$ hold over more general rings. There is in fact a generic coloured Kac-Moody algebra, which is universal in the sense that every other coloured Kac-Moody algebra can be obtained from it by specializing generic deformation parameters. Specializations will be a key feature of coloured Kac-Moody algebras, with several applications, as for example crystallographic Kac-Moody algebras to name one (we will show that there is a natural correspondence between representations of crystallographic Kac-Moody algebras and crystals of representations of quantum Kac-Moody algebras).

\subsection{Langlands interpolation}
P. Littelmann \cite{Littelmann} and K. McGerty \cite{McGerty} have revealed the existence of relations between representations of a symmetrizable Kac-Moody algebra $\glie$ and representations of its Langlands dual ${}^L \glie$ (the Kac-Moody algebra defined by transposing the Cartan matrix of $\glie$). They have proved that the action of the quantum algebra $U_q(\glie)$ on certain representations interpolates between an action of $\glie$ and an action of ${}^L \glie$. Namely, they have shown that the actions of $\glie$ and ${}^L \glie$ can be retrieved from the action of $U_q(\glie)$ by specializing the parameter $q$ to $1$, and to some root of unity $\epsilon$, respectively. In the case where $\glie$ is a finite-dimensional simple Lie algebra, E. Frenkel and D. Hernandez introduced in \cite{Frenkel-Hernandez} an algebra depending on an additional parameter $t$, and they conjectured the existence of representations for this algebra interpolating between representations of the quantum algebras $U_q(\glie)$ and $U_t({}^L \glie)$. They besides conjectured that the constructions could be extended to any symmetrizable Kac-Moody algebra $\glie$. They lastly suggested a Langlands duality for crystals.\par
We will give in a forthcoming paper a positive answer to these conjectures. More precisely, we will show that for any symmetrizable Kac-Moody algebra $\glie$ there exists a coloured Kac-Moody algebra $U(\glie,\Ncol_{q,t})$ whose representations possess the predicted interpolation property. Examples of such an algebra have been explicitly constructed by the author of this paper in the case of $\slt$ (within the more general context of isogenies of root data); see \cite{Bouayad}. Using crystallographic Kac-Moody algebras, we will moreover confirm manifestations of Langlands duality at the level of crystals. Let us precise that the coloured Kac-Moody algebra $U(\glie,\Ncol_{q,t})$ have relations with E. Frenkel and D. Hernandez's algebra only when $\glie = \slt$. For $\glie$ of higher ranks, the two algebras differ significantly.\par
We lastly mention that Langlands duality for quantum groups might have promising connections with the geometric Langlands correspondence; see \cite{Frenkel-Hernandez} and \cite{Frenkel-Reshetikhin}.

\subsection{Organization of the paper}
\label{Intro/Organization}
In section \ref{Preliminaries}, we introduce colourings and we construct deformed Verma modules $\Vermah$ associated to a colouring $\psi$. We briefly recall the notion of formal deformation of associative algebras, and we give several definitions suited to the context of this paper. We define the algebra $\Uh$, and we prove that $\Uh$ is a formal deformation of an extension of $\U$. In section \ref{Eqh}, we express the action of $\Uh$ on $\Vermah$ in terms of infinite-dimensional linear equations (proposition \ref{prop:Eqh/Interpretation}). We prove that these equations always admit regular solutions if and only if $\psi$ is regular (proposition \ref{prop:Eqh/Regular-solutions}); this is the key technical result of this paper. In section \ref{CKM}, we prove that $\Uh$ is a formal deformation of $\U$ if and only if the colouring $\psi$ is regular (theorem \ref{thm:CKM/Formal-deformations}). We give a Chevalley-Serre presentation of the coloured Kac-Moody algebra $\Uh$ (theorem \ref{thm:CKM/Generators-and-relations}). We show that the constant formal deformation $\U \hhh$ and the quantum algebra $\Uq$ can be realized as coloured Kac-Moody algebras (theorem \ref{thm:CKM/Realizations}). We prove that coloured Kac-Moody algebras are $\blie$-trivial deformations of $\U$, and admit unique $\blie$-trivializations (theorem \ref{thm:CKM/b-triviality}). We prove that regular colourings classify $\hlie$-trivial formal deformations of the $\alie$-algebra $\U$ (theorem \ref{thm:CKM/Classification}). As a corollary, we obtain a rigidity result for $\U$; namely, every $\hlie$-trivial formal deformation of the $\alie$-algebra $\U$ is also $\blie$-trivial, and admits a unique $\blie$-trivialization (corollary \ref{cor:CKM/b-triviality}).

\section{Preliminaries}
\label{Preliminaries}
In this section, we introduce colourings, and we construct deformed Verma modules $\Vermah$ associated to a colouring $\psi$. We briefly recall the notion of formal deformations of associative algebras, and we give a few definitions suited to the context of this paper. We define the algebra $\Uh$, and we prove that $\Uh$ is a formal deformation of an extension of the algebra $\U$.

\subsection*{Notations and conventions}
\subsubsection*{(i)} The integers are elements of $\ZZ = \{ \dotsc, -2, -1, 0, 1, 2, \dotsc \}$. The non-negative integers are elements of $\NN = \{ 0, 1, 2, \dotsc \}$. An empty sum is equal to zero, and an empty product is equal to one. We recall that $\field$ designates a field of characteristic zero. We denote by $\field^\ZZ$ the $\field$-vector space formed by functions from $\ZZ$ to $\field$.

\subsubsection*{(ii)} We denote by $\PSR$ the power series ring in the variable $h$ over the field $\field$. An element $\lambda$ in $\PSR$ is of the form $\lambda = \sum_{m \geq 0} \lambda_m h^m$ with $\lambda_m \in \field$. We regard $\field$ as a subring of $\PSR$.

\subsubsection*{(ii)} Associative algebras and their homomorphisms are unital. Representations are left. Let $\glie$ be a Lie algebra over $\field$, its universal enveloping algebra is denoted by $U(\glie)$. We identify as usual representations of $\glie$ with representations of $U(\glie)$.

\subsubsection*{(iv)} Let $B$ be a $R$-algebra ($R = \field, \PSR$). A $B$-algebra is a $R$-algebra $A$, together with a structural $R$-algebra homomorphism $f : B \to A$. Let $A'$ be another $B$-algebra, a $B$-algebra homomorphism from $A$ to $A'$ is a $R$-algebra homomorphism $g : A \to A'$ such that $g \circ f = f'$, where $f'$ designates the structural homomorphism from $B$ to $A'$. By $\glie$-algebra (for $\glie$ a Lie algebra over $\field$) we understand $U(\glie)$-algebra.

\subsubsection*{(v)} For $V_0$ a $\field$-vector space, we denote by $V_0 \hhh$ the $\PSR$-module formed by series of the form $v = \sum_{m \geq 0} v_m h^m$ with $v_m \in V_0$. We regard $V_0$ as a $\field$-vector subspace of $V_0 \hhh$. A structure of $\field$-algebra on $V_0$ induces a structure of $\PSR$-algebra on $V_0 \hhh$. Similarly, a representation $V_0$ of a $\field$-algebra $B_0$ yields a representation $V_0 \hhh$ of the $\PSR$-algebra $B_0 \hhh$. More generally, a $B_0$-algebra $A_0$ yields a $B_0 \hhh$-algebra $A_0 \hhh$.

\subsubsection*{(vi)} For $V$ a $\PSR$-module, we denote by $\hzero V$ the $\field$-vector space $V / h V$. For $f : V \to W$ a $\PSR$-linear map, we denote by $\hzero f$ the induced $\field$-linear map from $\hzero V$ to $\hzero W$. A structure of $\PSR$-algebra on $V$ induces a structure of $\field$-algebra on $\hzero V$. Similarly, a representation $V$ of a $\PSR$-algebra $B$ yields a representation $\hzero V$ of the $\field$-algebra $\hzero B$, and a $B$-algebra $A$ yields a $(\hzero B)$-algebra $\hzero A$.

\subsubsection*{(vii)} The $h$-adic topology of a $\PSR$-module $V$ is the linear topology whose local base at zero is formed by the $\PSR$-submodules $h^m V$ ($m \in \NN$). A $\PSR$-module $V$ is said topologically free if $V$ is isomorphic to $V_0 \hhh$ for some $\field$-vector space $V_0$, or equivalently, if $V$ is Hausdorff, complete and torsion-free; see for example \cite{Bourbaki}. A $\PSR$-algebra $A$ is said topologically generated by a subset $A'$, if $A$ is equal to the closure of the $\PSR$-subalgebra generated by $A'$.

\subsubsection*{(viii)}  We will often make use of the following two facts for a $\PSR$-linear map $f : V \to W$:\par
- $\hzero f$ surjective implies $f$ surjective, if $V$ is complete, and if $W$ is Hausdorff,\par
- $\hzero f$ injective implies $f$ injective, if $V$ is Hausdorff, and if $W$ is torsion-free.

\subsubsection*{(ix)} The Chevalley generators $X^-,X^+,H$ form a basis of $\slt$. It then follows from the Poincar\'e-Birkhoff-Witt theorem that the monomials $(X^-)^a (X^+)^b H^c$ ($a,b,c \geq 0$) form a basis of $\U$; we call it the PBW basis.

\subsection{Colourings}
We call the relations
\begin{equation}
\label{eq:Preliminaries/Colourings/slt-non-deformed-relations}
\tag{\ref{eq:Intro/Tannaka/slt-non-deformed-relations}}
	[H, X^\pm] \, = \, \pm 2 X^\pm
\end{equation}
the \emph{non-deformed relations} of $\slt$. We denote by $\alie$ the Lie algebra over $\field$ generated by $H,X^-,X^+$, and subject to the non-deformed relations of $\slt$. We are interested in this paper in deformations of the algebra $\U$, where the non-deformed relations of $\slt$ still hold. Representations of such deformations of $\U$ are in particular representations of the constant formal deformation $\Ua \hhh$, which we will denote by $\Uha$.\par
We denote by $\blie^+$ the Borel subalgebra of $\slt$ spanned by $H$ and $X^+$. We recall that a Verma module of $\slt$ is a representation of $\slt$ induced from a one-dimensional representation of $\blie^+$. For $n \in \ZZ$ the (integral) Verma module $\Verma$ of highest weight $n$ is equal to $\bigoplus_{k \geq 0} \field b_k$ as a vector space, and the action of $\slt$ is given by
\begin{equation}
\label{eq:Preliminaries/Colourings/Verma}
	\begin{aligned}
		H . b_k \, &= \, (n-2k) b_k , \\
		X^- . b_k \, &= \, b_{k+1}, \\
		X^+ . b_k \, &= \,
		\begin{cases}
			0& \text{if $k = 0$,} \\
			k (n - k + 1) b_{k-1}& \text{if $k \geq 1$.}
		\end{cases}
	\end{aligned}
\end{equation}
We remark that $H,X^-,X^+$ act on $\Verma$ as scalar multiplications between $\NN$ copies of $\field$:
\begin{center}
	\begin{tikzpicture}[>=stealth, every loop/.style={<-}]
		\matrix (a) [matrix of math nodes, column sep=1.25cm] {
			\, \field \, & \, \field \, & \phantom{\, \field \,} & \, \field \, & \, \field \, & \phantom{\, \field \,} \\
		};
		\node at (a-1-3) {$\cdots$};
		\node at (a-1-6) {$\cdots$};
		\path[->]
			(a-1-1) edge [loop above] node[above]{$\scriptstyle{n}$} (a-1-1)
			(a-1-2) edge [loop above] node[above]{$\scriptstyle{n-2}$} (a-1-2)
			(a-1-4) edge [loop above] node[above]{$\scriptstyle{n-2k}$} (a-1-3)
			(a-1-5) edge [loop above] node[above]{$\scriptstyle{n-2k-2}$} (a-1-4)
			(a-1-1) edge [bend left] node[above]{$\scriptstyle{1}$} (a-1-2)
			(a-1-2) edge [bend left] node[below]{$\scriptstyle{n}$} (a-1-1)
			(a-1-2) edge [bend left] node[above]{$\scriptstyle{1}$} (a-1-3)
			(a-1-3) edge [bend left] node[below]{$\scriptstyle{2(n - 1)}$} (a-1-2)
			(a-1-3) edge [bend left] node[above]{$\scriptstyle{1}$} (a-1-4)
			(a-1-4) edge [bend left] node[below]{$\scriptstyle{k(n - k + 1)}$} (a-1-3)
			(a-1-4) edge [bend left] node[above]{$\scriptstyle{1}$} (a-1-5)
			(a-1-5) edge [bend left] node[below]{$\scriptstyle{(k+1)(n - k)}$} (a-1-4)
			(a-1-5) edge [bend left] (a-1-6)
			(a-1-6) edge [bend left] (a-1-5)
		;
	\end{tikzpicture}
\end{center}
Let $\mathcal B^+$ designate the quiver formed by the bottom arrows of the previous graph. We can think of the action of $X^+$ on the integral Verma modules of $\slt$ as a $\ZZ$-graded representation of the quiver $\mathcal B^+$. This representation, which we denote by $\Ncol$, assigns to each vertex of $\mathcal B^+$ the $\field$-vector space $\bigoplus_{n \in \ZZ} \field$, and assigns to the $k$-th arrow ($k \geq 1$) the $\field$-linear map represented by the diagonal matrix $\mathrm{diag}(\Ncol^k(n); n \in \ZZ)$ where $\Ncol^k(n) = k(n-k+1)$.\par
Fixing the actions of $H$ and $X^-$, a formal deformation of the action of the Lie algebra $\alie$ on the integral Verma modules of $\slt$ corresponds to a formal deformation of the $\ZZ$-graded representation $\Ncol$ of the quiver $\mathcal B^+$. Such a deformation is specified by a deformation $\psi^k(n)$ in $\PSR$ of the scalar $\Ncol^k(n)$, for each $k \geq 1$ and for each $n \in \ZZ$:
\begin{center}
	\begin{tikzpicture}[>=stealth]
		\matrix (a) [matrix of math nodes, column sep=1.25cm] {
			\, \bullet \, & \, \bullet \, & \, \cdots \, & \, \bullet \, & \, \bullet \, & \, \cdots \\
		};
		\path[->]
			(a-1-2) edge node[above]{$\scriptstyle{\psi^1(n)}$} (a-1-1)
			(a-1-3) edge node[above]{$\scriptstyle{\psi^2(n)}$} (a-1-2)
			(a-1-4) edge node[above]{$\scriptstyle{\psi^k(n)}$} (a-1-3)
			(a-1-5) edge node[above]{$\scriptstyle{\psi^{k+1}(n)}$} (a-1-4)
			(a-1-6) edge (a-1-5)
		;
	\end{tikzpicture}
\end{center}
We recall that for each $n \geq 0$ there is a non-zero morphism from $\Verma[-n-2]$ to $\Verma$. In terms of the representation $\Ncol$, the property becomes $\Ncol^{n+1}(n) = 0$ and $\Ncol^{n+k+1}(n) = \Ncol^k(-n-2)$ for all $k \geq 1$ and all $n \geq 0$. A formal deformation of the representation $\Ncol$ preserving these conditions is called a colouring.
\begin{defi}
	A colouring is a sequence $\psi = (\psi^k)_{k \geq 1}$ with values in $\field^\ZZ \hhh$, verifying
	\begin{enumerate}
		\item[(C1)] $\psi^k(n) = k(n-k+1) \mod h$, for all $k,n$,
		\item[(C2)] $\psi^{n+1}(n) = 0$, for all $n \geq 0$,
		\item[(C3)] $\psi^{n+k+1}(n) = \psi^k(-n-2)$, for all $k \geq 1$ and all $n \geq 0$.
	\end{enumerate}
\end{defi}
As explained before, a colouring is meant to encode a formal deformation of the action of $X^+$ on the integral Verma modules of $\slt$, in such a way that the non-deformed relations of $\slt$ remain satisfied.
\begin{defi}
	Let $\psi$ be a colouring, and let $n \in \ZZ$. We denote by $\Vermah$ the representation of $\Uha$, whose underlying $\PSR$-module is $(\bigoplus_{k \geq 0} \field b_k) \hhh$, and where the action of $\Uha$ is given by
	\begin{equation*}
		\begin{aligned}
			H . b_k \, &= \, (n-2k) b_k, \\
			X^- . b_k \, &= \, b_{k+1}, \\
			X^+ . b_k \, &= \,
			\begin{cases}
				0& \text{if $k = 0$,} \\
				\psi^k(n) b_{k-1} & \text{if $k \geq 1$.}
			\end{cases}
		\end{aligned}
	\end{equation*}
\end{defi}
\begin{ex}
	We call \emph{natural colouring}, and we denote by $\Ncol$, the unique colouring with values in $\field^\ZZ$; it is defined by $\Ncol^k(n) = k(n-k+1)$. The natural colouring encodes the action of $\alie$ on the integral Verma modules of $\slt$: $\Vermah[n,\Ncol] = \Verma \hhh$ as representations of $\Uha$ for all $n \in \ZZ$.
\end{ex}
\begin{ex}
\label{ex:Preliminaries/Colourings/q-colouring}
	The quantum algebra $\Uq$ is the $\Uha$-algebra topologically generated by $H,X^-,X^+$, and subject to the relation
	\begin{equation*}
		[X^+, X^-] \, = \, \frac{q^H  - q^{-H}}{q - q^{-1}} \quad \text{with $q = \exp(h)$ and $q^H = \exp(hH)$,}
	\end{equation*}
	i.e.\ $\Uq$ is the quotient of $\Uha$ by the smallest closed (for the $h$-adic topology) two-sided ideal containing the previous relation. We denote by $\qcol$ the colouring defined by
	\begin{equation*}
		\qcol^k(n) = [k]_q [n-k+1]_q \quad \text{where $[k]_q = \frac{q^k - q^{-k}}{q - q^{-1}}$;}
	\end{equation*}
	we call it the \emph{$q$-colouring}. The $q$-colouring encodes the action of $\Uha$ on the integral Verma modules of $\Uq$: for all $n \in \ZZ$, the representation $\Vermah[n,\qcol]$ is the Verma module of $\Uq$ of highest weight $n$, when viewed as a representation of $\Uha$.
\end{ex}

\subsection{Formal deformation of associative algebras}
By formal deformation of a $\field$-algebra $A_0$, one usually designates a topologically free $\PSR$-algebra $A$, together with a $\field$-algebra isomorphism $f_0$ from $A_0$ to $\hzero A$. Two formal deformations $(A,f_0)$ and $(A',f'_0)$ are said equivalent if there exists a $\PSR$-algebra isomorphism $g$ from $A$ to $A'$ such that $\hzero g \circ f_0 = f'_0$.\par
As already mentioned, we are interested in this paper in formal deformations of $\U$ where the non-deformed relations \eqref{eq:Intro/Tannaka/slt-non-deformed-relations} of $\slt$ still hold. In other words, the deformations of interest will be the formal deformations of $\U$ within the
category of $\alie$-algebras ($\U$ has a canonical structure of $\alie$-algebra, induced by the projection map from $\alie$ to $\slt$). Let us remark that specifying a $B$-algebra structure on an algebra $A$ not only forces every relation in the algebra $B$ to be satisfied in $A$, but also fixes pointwise in $A$ the image of $B$. A formal deformation of the $\alie$-algebra $\U$ should therefore be understood as a formal deformation of the $\field$-algebra $\U$, together with a formal deformation of the Chevalley generators $H,X^-,X^+$ within the deformed algebra, in such a way that the
non-deformed relations of $\slt$ are preserved.
\begin{defi}
\label{defi:Preliminaries/Formal-deformations/algebra}
	Let $B_0$ be a $\field$-algebra, and let $A_0$ be a $B_0$-algebra. We suppose that the structural homomorphism from $B_0$ to $A_0$ is surjective. A formal deformation of the $B_0$-algebra $A_0$ is a $B_0 \hhh$-algebra $A$ such that
	\begin{enumerate}
		\item[(D1)] the $\PSR$-module $A$ is topologically free,
		\item[(D2)] the $B_0$-algebras $\hzero A$ and $A_0$ are isomorphic.
	\end{enumerate}
\end{defi}
Axiom (D2) may need a precision: the structure of $B_0 \hhh$-algebra on $A$ induces a structure of $(\hzero{B_0 \hhh})$-algebra on $\hzero A$, and thus a structure of $B_0$-algebra (the $\field$-algebras $\hzero{B_0 \hhh}$ and $B_0$ are canonically isomorphic).\par
As the structural homomorphism from $B_0$ to $A_0$ is surjective, there is a unique way to identify the $B_0$-algebras $\hzero A$ and $A_0$. This shows that definition \ref{defi:Preliminaries/Formal-deformations/algebra} extends the usual definition of a formal deformation of a $\field$-algebra. Let us also remark that in view of axiom (D1) the structural homomorphism from $B_0 \hhh$ to $A$ is necessarily surjective.
\begin{ex}
	The quantum algebra $\Uq$, as defined in example \ref{ex:Preliminaries/Colourings/q-colouring}, is a formal deformation of the $\alie$-algebra $\U$.
\end{ex}
\begin{defi}
	Let $B_0$ be a $\field$-algebra and let $V_0$ be a representation of $B_0$. A formal deformation of $V_0$ along $B_0 \hhh$ is a representation $V$ of $B_0 \hhh$ such that
	\begin{enumerate}
		\item[(D1')] the $\PSR$-module $V$ is topologically free,
		\item[(D2')] the representations $\hzero V$ and $V_0$ are isomorphic.
	\end{enumerate}\par
	Let $A$ be a formal deformation of a $B_0$-algebra $A_0$. We suppose that the action of $B_0$ on $V_0$ factorises through $A_0$. We say that $V$ is a formal deformation of $V_0$ along $A$ if the action of $B_0 \hhh$ on $V$ factorises through $A$.
\end{defi}
\begin{ex}
	For every $n \in \ZZ$, the representation $\Vermah[n,\qcol]$, where $\qcol$ designates the $q$-colouring (see example \ref{ex:Preliminaries/Colourings/q-colouring}), is a formal deformation along $\Uq$ of $\Verma$.
\end{ex}
The following lemma gives further examples of formal deformations of representations along the algebra $\Uha$. It follows immediately from the first colouring axiom.
\begin{lem}
\label{lem:Preliminaries/Formal-deformations/Vermah}
	Let $\psi$ be a colouring. For every $n \in \ZZ$, the representation $\Vermah$ is a formal deformation along $\Uha$ of the Verma module $\Verma$, when viewed as a representation of $\Ua$.
\end{lem}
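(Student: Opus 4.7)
The plan is to verify the two axioms (D1') and (D2') for the representation $\Vermah$ of the algebra $\Uha = \Ua \hhh$, with the underlying module $V_0 \hhh$ where $V_0 = \bigoplus_{k \geq 0} \field b_k$.

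First I would dispatch (D1'). The underlying $\PSR$-module of $\Vermah$ is by construction of the form $V_0 \hhh$, which is topologically free by notation (v) of the preliminaries. So this axiom is free.

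Next I would turn to (D2'), which requires showing that $\hzero{\Vermah}$ and $\Verma$ are isomorphic as representations of $\Ua$. Here I would use the canonical identification $\hzero{V_0 \hhh} \cong V_0$ which sends the class of $b_k$ to $b_k$. Under this identification, reducing the action of $\Uha$ on $\Vermah$ modulo $h$ gives the induced action of $\hzero \Uha \cong \Ua$ on $V_0$. Reading off the formulas in the definition of $\Vermah$, the actions of $H$ and $X^-$ are already $h$-independent and coincide with those in \eqref{eq:Preliminaries/Colourings/Verma}. For $X^+$, axiom (C1) gives $\psi^k(n) \equiv k(n-k+1) \pmod{h}$, so
\begin{equation*}
X^+ . b_k \; \equiv \; \psi^k(n) b_{k-1} \; \equiv \; k(n-k+1) b_{k-1} \pmod{h}
\end{equation*}
for $k \geq 1$, matching the action of $X^+$ on $\Verma$. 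Hence the two representations of $\Ua$ agree on the generators $H, X^-, X^+$ of $\alie$ and therefore agree as $\Ua$-modules.

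No step here is genuinely hard; the verification is purely a matter of unwinding definitions. The only mildly subtle point is recognising that the natural target is a representation of $\Ua$ rather than of $\U$ itself: the deformed Chevalley relation $[X^-,X^+] = H$ need not survive the deformation, but the non-deformed relations \eqref{eq:Preliminaries/Colourings/slt-non-deformed-relations} do, which is precisely why the statement is phrased with $\Ua$ and not $\U$. This is ensured by the fact that the actions of $H$ and $X^\pm$ on $\Verma$ are compatible with $[H,X^\pm] = \pm 2 X^\pm$, a direct check on the basis $(b_k)$.
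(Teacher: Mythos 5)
Your proof is correct and follows exactly the route the paper intends: the paper dispenses with the lemma in one line (``it follows immediately from the first colouring axiom''), and your verification of (D1$'$) from the construction of the underlying module and of (D2$'$) from axiom (C1) is precisely the unwinding of that remark.
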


\subsection{The algebra $\Uh$}
Integral Verma modules of $\slt$ distinguish elements in the algebra $\U$, i.e.\ an element in $\U$ is zero if and only if it acts by zero on $\Verma$ for all $n \in \ZZ$. This remains true if we replace integral Verma modules of $\slt$ with finite-dimensional irreducible representations of $\slt$. These are known facts. We give a proof of them, for the reader's convenience.
\begin{prop}
\label{prop:Preliminaries/Uh/perfect}
	Let $x \in \U$. The following assertions are equivalent.
	\begin{enumerate}
		\item The element $x$ is zero.
		\item The element $x$ acts by zero on all the integral Verma modules of $\slt$.
		\item The element $x$ acts by zero on all the finite-dimensional irreducible representations of $\slt$.
	\end{enumerate}
\end{prop}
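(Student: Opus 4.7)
\medskip

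The implications (1)$\Rightarrow$(2) and (1)$\Rightarrow$(3) are tautological, so the work is in (2)$\Rightarrow$(1) and (3)$\Rightarrow$(1). My plan is to exploit the PBW basis together with the rigidity of polynomial identities: a polynomial in one variable over $\field$ that vanishes on infinitely many integers is zero. Write $x = \sum_{a,b,c \geq 0} \lambda_{a,b,c} (X^-)^a (X^+)^b H^c$ in the PBW basis (finite sum).

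For (2)$\Rightarrow$(1), I would evaluate $x$ on the standard basis vectors $b_k$ of $\Verma$, using induction on $k$. The key observation is that $(X^+)^b$ annihilates $b_k$ whenever $b > k$, and
\[
(X^+)^k b_k \, = \, k! \, n(n-1) \cdots (n-k+1) \, b_0 .
\]
At $k = 0$, only the terms with $b = 0$ contribute: $x \cdot b_0 = \sum_{a,c} \lambda_{a,0,c} n^c b_a$. Vanishing in $\Verma$ for every $n \in \ZZ$ forces, for each fixed $a$, the polynomial $\sum_c \lambda_{a,0,c} n^c$ to vanish on all of $\ZZ$, hence to be zero; so $\lambda_{a,0,c} = 0$. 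Assume inductively $\lambda_{a,b,c} = 0$ for all $b < k$. Then the only surviving terms in $x \cdot b_k = 0$ are those with exponent exactly $k$ on $X^+$, yielding
\[
\sum_{a,c} \lambda_{a,k,c} (n-2k)^c \, k! \, n(n-1) \cdots (n-k+1) \, b_a \, = \, 0 .
\]
Since $n(n-1)\cdots(n-k+1)$ is a nonzero polynomial in $n$ and the $b_a$ are linearly independent, extracting the coefficient of $b_a$ and dividing out the nonzero factor yields $\sum_c \lambda_{a,k,c} (n-2k)^c = 0$ for all $n \in \ZZ$, hence $\lambda_{a,k,c} = 0$ for all $a,c$. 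The induction then exhausts the PBW expansion and gives $x = 0$.

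For (3)$\Rightarrow$(1), I would run the very same induction inside the finite-dimensional irreducible $\Simple$ ($n \geq 0$), noting that the formulas above are valid verbatim on the image of $b_k$ in $\Simple$ as soon as $n \geq k$ (so that $b_k$ is not killed by the canonical projection $\Verma \twoheadrightarrow \Simple$). For each fixed $k$ and $a$ the resulting polynomial identity in $n$ then holds for all integers $n \geq \max(a,k)$; infinitely many values still suffice to conclude that the polynomial is identically zero, and the induction proceeds as before.

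The argument is essentially a bookkeeping exercise, so there is no real obstacle; the only point requiring a little care is to carry out the induction on $k$ in the correct order, so that the inductive hypothesis kills all the PBW monomials with $b < k$ before one reads off the constraint on the $b = k$ coefficients. The final clean-up — checking that the nonzero polynomial factor $k!\, n(n-1)\cdots(n-k+1)$ can indeed be cancelled in the integral domain $\field[n]$ — is routine.
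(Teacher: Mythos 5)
Your proof is correct and rests on the same mechanism as the paper's: expand $x$ in the PBW basis, act on the vectors $b_k$, use that $(X^+)^b$ kills $b_k$ for $b>k$ together with the scalar $k!\,n(n-1)\cdots(n-k+1)$, and conclude from the vanishing of a polynomial at infinitely many integers. The only organizational difference is that the paper first reduces to a homogeneous $x$ via the $\ZZ$-gradation of $\U$ and evaluates at the single minimal index $a_1$, proving only (iii)$\Rightarrow$(i) and getting (ii)$\Rightarrow$(iii) for free from the surjection $\Verma \twoheadrightarrow \Simple$, whereas you run the induction over all $k$ and treat (2)$\Rightarrow$(1) and (3)$\Rightarrow$(1) separately.
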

\begin{proof}
The implication (i) $\Rightarrow$ (ii) is immediate. For $n \geq 0$ we denote by $\Simple$ the unique (up to isomorphism) irreducible representation of $\slt$ of dimension $n+1$. The representation $\Simple$ is a quotient of the integral Verma module $\Verma$. As a consequence, assertion (ii) implies assertion (iii). Let us prove that assertion (iii) implies assertion (i). Let $x$ be a non-zero element in $\U$, and let us suppose for the sake of contradiction that $x$ acts by zero on $\Simple$ for all $n \in \ZZ$. The representation $\Simple$ is equal to $\bigoplus^n_{k = 0} \field b_k$ as a vector space, and the action of $\slt$ is given by
\begin{equation}
\label{eq:Preliminaries/Uh/Simple}
	\begin{aligned}
		H . b_k \, &= \, (n-2k) b_k , \\
		X^- . b_k \, &= \,
		\begin{cases}
			b_{k+1}& \text{if $k \leq n-1$,} \\
			0& \text{if $k = n$,}
		\end{cases} \\
		X^+ . b_k \, &= \,
		\begin{cases}
			0& \text{if $k = 0$,} \\
			k (n - k + 1) b_{k-1}& \text{if $k \geq 1$.}
		\end{cases}
	\end{aligned}
\end{equation}
The $\field$-algebra $\U$ has a $\ZZ$-gradation, defined by $\deg(H) = 0$ and $\deg(X^\pm) = \pm 1$. According to the way $\U$ acts on $\Simple$, we can assume without loss of generality that $x$ is a homogeneous element of $\U$. Let $d$ designate the degree of $x$. In view of the PBW basis of $\U$, $x = \sum_{a = a_1}^{a_2} (X^-)^{a-d} (X^+)^a \xi^a (H)$ for some $\xi^{a_1}(H), \dotsc, \xi^{a_2}(H) \in \field[H]$, with $a_2 \geq a_1 \geq \max(0,d)$. As $x$ is non-zero, we can assume that $\xi^{a_1}(H) \neq 0$. Let $n \geq a_1,a_1-d$. According to \eqref{eq:Preliminaries/Uh/Simple}, the action of $x$ on $\Simple$ satisfies
\begin{equation*}
	x . b_{a_1} \, = \, (X^-)^{a_1-d} (X^+)^{a_1} \xi^{a_1} (H) . b_{a_1} \, = \, \frac{a_1! \, n!}{(n-a_1)!} \xi^{a_1}(n-2a_1) b_{a_1-d}.
\end{equation*}
It follows that $\xi^{a_1}(n-2a_1)$ is zero for all $n \geq a_1,a_1-d$. The polynomial $\xi^{a_1}(H)$ is therefore zero, which is a contradiction.
\end{proof}
In other words, one can define the algebra $\U$ as the quotient of $\Ua$ by $I$, where $I$ designates the two-sided ideal of $\Ua$ formed by the elements acting by zero on all the integral Verma modules of $\slt$, when viewed as representations of $\Ua$. This construction of $\U$ may be viewed as an expression of a Tannaka duality between the algebra $\U$ on the one side, and the Verma modules $\Verma$ on the other. We propose to consider the same construction, where integral Verma modules of $\slt$ now carry a ``coloured'' action.
\begin{defi}
	Let $\psi$ be a colouring. We denote by $I_h(\psi)$ the two-sided ideal of $\Uha$ formed by the elements acting by zero on all the representations $\Vermah$ ($n \in \ZZ$). We denote by $\Uh$ the quotient of $\Uha$ by $I_h(\psi)$.
\end{defi}
The algebra $\Uh$ has a natural structure of $\Uha$-algebra, given by the projection map from $\Uha$ to $\Uh$, and it follows from the definition that the action of $\Uha$ on $\Vermah$ factorises through $\Uh$. The algebra $\Uh$ is universal for this property.
\begin{prop}
\label{prop:Preliminaries/Uh/universal}
	Let $\psi$ a colouring. If $A$ is a $\Uha$-algebra such that
	\begin{enumerate}
		\item the structural homomorphism from $\Uha$ to $A$ is surjective,
		\item the action of $\Uha$ on $\Vermah$ factorises through $A$ for all $n \in \ZZ$,
	\end{enumerate}
	then there is a unique surjective $\Uha$-algebra homomorphism from $A$ to $\Uh$.
\end{prop}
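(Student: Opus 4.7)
The statement is a routine universal-property check, so I would attack it by extracting the two given hypotheses into ideal-theoretic information and applying the first isomorphism theorem for associative algebras.

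First, I would denote by $\pi_A \colon \Uha \to A$ the structural $\Uha$-algebra homomorphism, which by hypothesis (i) is surjective; set $J = \ker(\pi_A)$, so $A \cong \Uha / J$ as a $\Uha$-algebra. Denote by $\pi \colon \Uha \to \Uh$ the projection, whose kernel is $I_h(\psi)$ by definition. The plan is to produce the desired map as the unique factorisation $\Uha / J \to \Uha / I_h(\psi)$ coming from an inclusion $J \subseteq I_h(\psi)$.

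To establish this inclusion, I would use hypothesis (ii): for every $n \in \ZZ$, the action $\rho_n \colon \Uha \to \End_{\PSR}(\Vermah)$ factors through $A$, i.e.\ $\rho_n = \tilde{\rho}_n \circ \pi_A$ for some $\PSR$-algebra homomorphism $\tilde{\rho}_n$. Hence any $u \in J$ satisfies $\rho_n(u) = \tilde{\rho}_n(\pi_A(u)) = 0$ for all $n$, which by definition of $I_h(\psi)$ means $u \in I_h(\psi)$. Thus $J \subseteq I_h(\psi)$, and the induced map $A \cong \Uha / J \twoheadrightarrow \Uha / I_h(\psi) = \Uh$ is a surjective $\Uha$-algebra homomorphism, since it is induced by the identity of $\Uha$ and therefore commutes with the structural homomorphisms from $\Uha$.

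For uniqueness, I would observe that any $\Uha$-algebra homomorphism $f \colon A \to \Uh$ satisfies $f \circ \pi_A = \pi$ (the structural map to $\Uh$); since $\pi_A$ is surjective by (i), $f$ is completely determined by this equality.

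There is essentially no obstacle here: once one observes that (ii) says exactly that the kernel of $\pi_A$ is contained in $I_h(\psi)$, the conclusion is a purely formal consequence of the first isomorphism theorem. The only care needed is to verify that the induced map is a $\Uha$-algebra homomorphism, which is immediate because both $\pi_A$ and $\pi$ come from the same identity of $\Uha$.
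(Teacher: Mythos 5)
Your proposal is correct and follows essentially the same route as the paper: both arguments reduce the statement to the inclusion $\ker(\pi_A) \subseteq I_h(\psi)$, which is exactly what hypothesis (ii) gives, and both obtain uniqueness and surjectivity from the surjectivity of the structural homomorphisms. The only difference is cosmetic — you phrase the factorisation through the first isomorphism theorem, while the paper argues directly element by element.
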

\begin{proof}
Let $f$ and $g$ be the structural homomorphisms from $\Uha$ to $A$ and $\Uh$, respectively. As $f$ and $g$ are surjective, a $\Uha$-algebra homomorphism from $A$ to $\Uh$ is necessarily unique and surjective. Let $x$ be an element in $\Uha$ such that $f(x) = 0$. Since the action of $\Uha$ on $\Vermah$ factorises through $A$ for all $n \in \ZZ$, it follows that $x$ acts by zero on $\Vermah$ for all $n \in \ZZ$. It implies, by definition of the algebra $\Uh$, that the image of $x$ in $\Uh$ is zero. Put in other words, the map $g$ factorises through $f$.
\end{proof}
The algebra $\Uh$ is not in general a formal deformation of $\U$. We will give in section \ref{CKM} a sufficient and necessary condition on the colouring $\psi$ for $\Uh$ to be a formal deformation of $\U$. However, the algebra $\Uh$ always satisfies the first axiom of a formal deformation; namely, $\Uh$ is topologically free.
\begin{lem}
\label{lem:Preliminaries/Uh/topologically-free}
	For any colouring $\psi$, the $\PSR$-module $\Uh$ is topologically free.
\end{lem}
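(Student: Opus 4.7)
The plan is to verify the three characterizations of topological freeness recalled in the preliminaries: that $\Uh$ is Hausdorff, complete and torsion-free for the $h$-adic topology. The key preliminary observation is that each deformed Verma module $\Vermah = \bigl(\bigoplus_{k \geq 0} \field b_k\bigr)\hhh$ is itself topologically free, hence Hausdorff, complete and torsion-free. For each $n \in \ZZ$ and each $v \in \Vermah$, the evaluation map $x \mapsto x \cdot v$ from $\Uha$ to $\Vermah$ is continuous $\PSR$-linear, so its kernel is closed in $\Uha$; intersecting over all $n$ and $v$ exhibits $I_h(\psi)$ as a closed submodule of $\Uha$. Since closed submodules have the property that the induced $h$-adic topology on the quotient coincides with the quotient topology, this will be the engine driving everything.

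For torsion-freeness, if $[x] \in \Uh$ satisfies $h[x] = 0$ and $x \in \Uha$ is a representative, then $hx \in I_h(\psi)$, so $h(x \cdot v) = 0$ in $\Vermah$ for every $n$ and every $v$; torsion-freeness of $\Vermah$ then gives $x \cdot v = 0$, whence $x \in I_h(\psi)$ and $[x] = 0$. The Hausdorff property is verified analogously: if $[x] \in \bigcap_m h^m \Uh$, one writes $x = h^m y_m + z_m$ with $z_m \in I_h(\psi)$ for each $m \geq 0$, so that $x \cdot v \in \bigcap_m h^m \Vermah = 0$ by Hausdorffness of $\Vermah$, and hence $x \in I_h(\psi)$.

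For completeness I would work with telescoping partial sums. Given a Cauchy sequence $([x_n])_{n \geq 0}$ in $\Uh$, extract integers $a_n \to \infty$ and elements $y_n \in \Uha$ with $x_{n+1} - x_n \equiv h^{a_n} y_n \pmod{I_h(\psi)}$. Setting $\tilde x_n := x_0 + \sum_{k < n} h^{a_k} y_k$ defines a Cauchy sequence in the complete $\PSR$-module $\Uha$, which converges to some $\tilde x \in \Uha$; by induction the class $[\tilde x_n]$ equals $[x_n]$, so continuity of the projection $\Uha \to \Uh$ yields $[x_n] \to [\tilde x]$ in $\Uh$.

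The argument is essentially formal once one notes that $I_h(\psi)$ is closed, so I do not anticipate a significant obstacle. What is \emph{not} being shown here (and what will require the regularity hypothesis treated in section \ref{CKM}) is that the reduction $\hzero{\Uh}$ recovers $\U$; this lemma only establishes the first of the two axioms (D1) and (D2) of a formal deformation.
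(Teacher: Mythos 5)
Your proof is correct and follows essentially the same route as the paper: both arguments establish that $\Uh$ is Hausdorff, complete and torsion-free, deriving completeness from that of $\Uha$ via the quotient, and deriving the Hausdorff and torsion-free properties from the corresponding properties of the topologically free modules $\Vermah$ (the paper packages this last step as an embedding of $\Uh$ into $\prod_{n} \End_{\PSR}(\Vermah)$, which is just a condensed form of your element-by-element evaluation argument).
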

\begin{proof}
The $\PSR$-module $\Uha$ is by definition topologically free. It is in particular complete (for the $h$-adic topology). As the algebra $\Uh$ is a quotient of $\Uha$, it is also complete. For $n \in \ZZ$ we denote by $E(n)$ the $\PSR$-algebra $\End_{\PSR} (\Vermah)$ and we denote by $f_n$ the $\PSR$-algebra homomorphism from $\Uha$ to $E(n)$ given by the representation $\Vermah$. We denote by $f$ the product of the $f_n$'s ($n \in \ZZ$). The ideal $I_h(\psi)$ is equal to $\ker(f)$. The $\PSR$-modules $\Vermah$ are by definition topologically free. They are in particular Hausdorff (for the $h$-adic topology) and torsion-free. Hence so is $E = \prod_{n \in \ZZ} E(n)$. As $I_h(\psi) = \ker(f)$, the algebra $\Uh$ is isomorphic to a $\PSR$-subalgebra of $E$. Therefore, $\Uh$ is Hausdorff and torsion-free. In conclusion, $\Uh$ is Hausdorff, complete and torsion-free. It is thus topologically free.
\end{proof}
Proving that $\Uh$ is a formal deformation of the $\alie$-algebra $\U$ consists from now in proving that $\hzero \Uh$ is isomorphic as an $\alie$-algebra to $\U$. As mentioned earlier, this is not true for a general colouring $\psi$. Nevertheless, we can show that the algebra $\hzero \Uh$ is always an extension of $\U$.
\begin{lem}
\label{lem:Preliminaries/Uh/projection-map}
	For any colouring $\psi$, there is a unique surjective $\alie$-algebra homomorphism from $\hzero \Uh$ to $\U$.
\end{lem}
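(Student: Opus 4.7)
The plan is to realize $\hzero{\Uh}$ as a quotient of $\Ua$ and then show that this quotient factors through the canonical surjection $\pi : \Ua \twoheadrightarrow \U$. Since $\Uha = \Ua \hhh$, one has $\hzero{\Uha} = \Ua$ canonically, and the structural surjection $\Uha \twoheadrightarrow \Uh$ reduces modulo $h$ to a surjective $\alie$-algebra homomorphism $\varphi : \Ua \twoheadrightarrow \hzero{\Uh}$ whose kernel $J$ is exactly the image of $I_h(\psi)$ in $\Ua$. Once this is set up, uniqueness of any $\alie$-algebra homomorphism $\hzero{\Uh} \to \U$ is automatic, since both sides are $\alie$-algebras and the structural map $\varphi$ on the source is surjective; surjectivity of the induced map is equally immediate from that of $\pi$. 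The whole content of the lemma therefore reduces to verifying the inclusion $J \subseteq \ker(\pi)$.

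To establish this inclusion, I would pick $\bar{x} \in J$ as the image of some $x \in I_h(\psi)$. By definition of $I_h(\psi)$, the element $x$ acts by zero on $\Vermah$ for every $n \in \ZZ$. Lemma \ref{lem:Preliminaries/Formal-deformations/Vermah} then identifies $\hzero{\Vermah}$ with the integral Verma module $\Verma$ of $\slt$ regarded as a representation of $\Ua$. Reducing the action of $x$ on $\Vermah$ modulo $h$ thus shows that $\bar{x}$ acts by zero on $\Verma$ for every $n \in \ZZ$. Proposition \ref{prop:Preliminaries/Uh/perfect} now yields $\pi(\bar{x}) = 0$, delivering the desired inclusion $J \subseteq \ker(\pi)$ and hence the factorization of $\pi$ through $\varphi$.

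The argument is a short diagram chase and I do not anticipate a serious obstacle: the only non-trivial inputs are the Tannaka duality statement of proposition \ref{prop:Preliminaries/Uh/perfect}, which reconstructs $\U$ from its integral Verma modules, and the deformation compatibility of lemma \ref{lem:Preliminaries/Formal-deformations/Vermah}, where colouring axiom (C1) enters. Notably no regularity assumption on $\psi$ is required at this stage, which matches the unconditional scope of the lemma; regularity will only become essential when one tries to promote the surjection $\hzero{\Uh} \twoheadrightarrow \U$ to an isomorphism, an issue addressed in section \ref{CKM}.
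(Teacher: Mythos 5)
Your proposal is correct and follows essentially the same route as the paper: both realize $\hzero{\Uh}$ as the quotient of $\Ua$ by the image of $I_h(\psi)$, reduce the statement to showing that image lies in the kernel of $\Ua \twoheadrightarrow \U$, and conclude by combining lemma \ref{lem:Preliminaries/Formal-deformations/Vermah} with proposition \ref{prop:Preliminaries/Uh/perfect}. The only cosmetic difference is that the paper invokes right-exactness of the functor $\hzero{(\bullet)}$ to justify the identification $\hzero{\Uh} \cong \Ua / I_0(\psi)$, which you assert directly.
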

\begin{proof}
The structural homomorphisms from $\Ua$ to the $\alie$-algebras $\hzero \Uh$ and $\U$ are surjective. It implies  that an $\alie$-algebra homomorphism from $\hzero \Uh$ to $\U$ is necessarily unique and surjective. Let us consider the functor $\hzero{(\bullet)}$ from the category of $\PSR$-modules to the category of $\field$-vector spaces. It is is a right-exact functor. Hence, there is a natural isomorphism between $\hzero \Uh$ and the quotient of $\hzero{\Uha}$ by $I_0(\psi)$, where $I_0(\psi)$ designates the image of $\hzero{I_h(\psi)}$ in $\hzero \Uha$. Using the canonical identification between $\hzero{\Uha}$ and $\Ua$, proving the lemma then reduces to proving that every element of $I_0(\psi) \subset \Ua$ is zero in $\U$. For all $n \in \ZZ$, the representation $\Vermah$ is a formal deformation along $\Uha$ of the Verma module $\Verma$, when viewed as representation of $\Ua$ (lemma \ref{lem:Preliminaries/Formal-deformations/Vermah}). This implies that every element $x \in I_0(\psi)$ acts by zero on all the integral Verma modules of $\slt$, and in consequence that $x$ is zero in $\U$ (proposition \ref{prop:Preliminaries/Uh/perfect}).
\end{proof}

\section{The equation $\psi \ltimes \xi = \theta$}
\label{Eqh}
We have established in section \ref{Preliminaries} that for every colouring $\psi$ the algebra $\Uh$ is a formal deformation of an extension of the $\alie$-algebra $\Uh$. Namely, we proved that $\Uh$ is a topologically free $\PSR$-module, and we proved that there is a surjective $\alie$-algebra homomorphism from $\hzero \Uh$ to $\U$. It follows that $\Uh$ is a formal deformation of $\U$ if and only if the aforementioned homomorphism is also injective. It is equivalent to prove that there is a relation in $\Uh$ which deforms the relation $[X^+,X^-] = H$ of $\U$, or, that the element $X^+X^-$ can be expressed in $\Uh$ as a linear combination (more precisely, as a limit of linear combinations) of the monomials $(X^-)^a (X^+)^b H^c$ ($a,b,c \geq 0$); see section \ref{CKM}. In order to address this problem, we introduce in this section infinite-dimensional linear equations which encode the action of $\Uh$ on $\Vermah$ (proposition \ref{prop:Eqh/Interpretation}). We prove that these equations always admit regular solutions if and only if the colouring $\psi$ is regular (proposition \ref{prop:Eqh/Regular-solutions}); this is the key technical result of this paper.

\subsection{Definitions and notations}
We designate by $\Coeff$ ($d \in \NN$) the $\PSR$-module formed by sequences with values in $\field^\ZZ \hhh$, of the form $f = (f^k)_{k \geq d}$ with $f^k = \sum_{m \geq 0} f^k_m h^m$ and $f^k_m \in \field^\ZZ$. We say that $f \in \Coeff$ is \emph{summable} if $f^k$ tends to zero (for the $h$-adic topology) as $k$ goes to infinity. We say that $f$ is of \emph{Verma type} if it verifies
\begin{align}
	\label{eq:Eqh/Definition/Verma1}
	& \text{$f^k(n) = 0$ for all $k \geq d$ and all $n \geq 0$, such that $n+1 \leq k \leq n+d$,}\\
	\label{eq:Eqh/Definition/Verma2}
	& \text{$f^{n+k+1}(n) = f^k(-n-2)$ for all $k \geq d$ and all $n \geq 0$.}
\end{align}
For each $d \geq 0$ we denote by $\SCoeff$ and $\VCoeff$ the $\PSR$-submodules of $\Coeff$ formed by the summable sequences, and by the sequences of Verma type, respectively. Let us remark that colourings form a subset of $\VCoeff[1]$.\par
A sequence $f$ in $\Coeff$ is said \emph{quasi-regular} if $f^k(n) \in \field[n] \hhh$ for all $k$ (i.e.\ the value $f^k_m(n) \in \field$ depends polynomially on $n$ for all $k,m$). We say that $f$ is \emph{regular} if there exists a sequence $g$ in $\SCoeff[0]$ such that $f^k(n) = \sum_{a = 0}^\infty g^a(k) n^a$ for all $k \geq d$ and all $n$ (the series is convergent as $g$ is summable). Let us remark that $f$ is regular if and only if it is quasi-regular, and if for each $m$ the degree of the polynomial $f^k_m(n)$ is a function of $k$ bounded above.\par
For $f \in \Coeff$ we denote by $f[-1]$ the sequence in $\Coeff[d+1]$ defined by $(f[-1])^k = f^{k-1}$. We denote by $[+1]$ the inverse $\PSR$-linear map, from $\Coeff[d+1]$ to $\Coeff$. Let us remark that the maps $[-1]$, $[+1]$ preserve summability and regularity. Let us furthermore remark that $[+1]$ sends sequences of Verma type to sequences of Verma type (this is not true for $[-1]$ in general). \par
Let $\psi \in \VCoeff[1]$ and let $\xi \in \Coeff$. We denote by $\psi \ltimes \xi$ the sequence in $\VCoeff$ defined for $k \geq d$ and for $n \in \ZZ$ by
\begin{equation*}
	(\psi \ltimes \xi)^k(n) \, = \, \sum_{a = d}^k \left( \prod_{b = k-a+1}^k \psi^b(n) \right) \xi^a(n-2k).
\end{equation*}
Let us note that the product $\prod_{b = k-a+1}^k \psi^b(n)$ is empty when $a$ is zero, and thus equal to one by convention.
\begin{rem}
\label{rem:Eqh/Definition}
	A sequence $f$ in $\VCoeff$ ($d \in \NN$) is regular if and only if $f^k(n) \in \field[k,n] \hhh$ (i.e.\ the value $f^k_m(n) \in \field$ depends polynomially on $k$ and $n$, for each $m$). Whereas we do not really use this fact in the present paper, the author believes that it is interesting in its own.
\end{rem}
\begin{proof}[of remark \ref{rem:Eqh/Definition}]
Using the maps $[+1]$ and $[-1]$, we can assume without loss of generality that $d=0$. Let us fix $m \geq 0$. On the one hand, it follows from the definition of regularity that there are functions $c^0, c^1, \dotsc, c^p$ ($p \geq 0$) from $\NN$ to $\field$ such that $f^k_m(n) = \sum_{a = 0}^p c^a(k) n^a$ for all $k,n$. On the other hand, $f$ being of Verma type, it follows from condition \eqref{eq:Eqh/Definition/Verma2} that $f^{n+k+1}_m(n) \in \field[n]$ for all $k \geq 0$. The sequence $(f^k_m)_{k \geq 0}$ therefore verifies the assumptions of lemma \ref{lem:Eqh/Definition} below. This proves that if $f$ is regular, then $f^k(n) \in \field[k,n] \hhh$. The converse implication is immediate.
\end{proof}
\begin{lem}
\label{lem:Eqh/Definition}
	Let $(f^k)_{k \geq 0}$ be a sequence with values in $\field^\ZZ$. We suppose that there are functions $c^0, c^1 \dotsc, c^p$ ($p \geq 0$) from $\NN$ to $\field$ such that $f^k(n) = \sum_{a=0}^p c^a(k) n^a$, and we suppose that $f^{n+k+1}(n) \in \field[n]$ for all $k \geq 0$. Then $f^k(n) \in \field[k,n]$.
\end{lem}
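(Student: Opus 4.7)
My plan is to establish the polynomiality of each coefficient $c^a : \NN \to \field$ via a single Vandermonde inversion. For every $k \geq 0$ let $P_k(n) \in \field[n]$ denote the polynomial furnished by the hypothesis, so that $f^{n+k+1}(n) = P_k(n)$ over the natural domain $n \geq -k-1$ on which $f^{n+k+1}$ is defined. Substituting the representation $f^m(n) = \sum_{a=0}^p c^a(m)\, n^a$ into this identity and setting $m := n+k+1$, I obtain, for every $m \in \NN$, the equation
\[
\sum_{a=0}^p c^a(m)\,(m-k-1)^a \;=\; P_k(m-k-1),
\]
whose right-hand side is a polynomial in $m$.

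Next I fix the $p+1$ values $k = 0, 1, \dotsc, p$. The resulting $(p{+}1)\times(p{+}1)$ matrix $M(m) := \bigl((m-k-1)^a\bigr)_{0 \leq k, a \leq p}$ is Vandermonde in the distinct entries $x_k = m-k-1$, with determinant $\prod_{0 \leq k < k' \leq p}(k-k')$ --- a \emph{nonzero constant} independent of $m$. Hence every entry of $M(m)^{-1}$ is itself a polynomial in $m$, and solving the linear system yields
\[
c^a(m) \;=\; \sum_{k=0}^p \alpha_{a,k}(m)\, P_k(m-k-1), \qquad \alpha_{a,k}(m) \in \field[m].
\]
Since each $P_k(m-k-1)$ is polynomial in $m$, every $c^a(m)$ is polynomial in $m$, and therefore $f^k(n) = \sum_{a=0}^p c^a(k)\, n^a \in \field[k,n]$, as claimed.

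The principal delicacy --- and the step to be handled with care --- is ensuring that the Vandermonde inversion applies uniformly down to $m = 0$. For this the hypothesis must be read over the full natural range of $n$, including the negative values $n = m - k - 1 \in \{-k-1, \dotsc, -1\}$ that appear for small $m$, so that all $p+1$ equations are simultaneously valid on $m \in \NN$. If one restricts the hypothesis to $n \geq 0$, the inversion produces polynomiality of $c^a$ only on $\{p+1, p+2, \dotsc\}$, and the remaining finitely many values must be reconciled separately; in the context of Remark~\ref{rem:Eqh/Definition} that motivates the lemma, this completion is furnished by the Verma identity $f^{n+k+1}(n) = f^k(-n-2)$, which forces the polynomial form of $P_k$ to extrapolate across all relevant arguments.
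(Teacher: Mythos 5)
Your proof is correct, and it takes a genuinely different route from the paper's. The paper argues by induction on $p$: it applies the $p$-th finite difference operator to the diagonal evaluations $f^{n+1}(n-k)$, $k=0,\dotsc,p$, to exhibit the leading coefficient $c^p(n+1)=\tfrac{1}{p!}\sum_{k=0}^p\tfrac{(-1)^k}{k!(p-k)!}\,p!\,f^{n+1}(n-k)$ as a polynomial in $n$, then subtracts $c^p(k)n^p$ and invokes the induction hypothesis for the truncated sequence. You instead solve for all of $c^0(m),\dotsc,c^p(m)$ simultaneously, observing that the $p+1$ diagonals $k=0,\dotsc,p$ yield a Vandermonde system at the equally spaced nodes $m-k-1$ whose determinant $\prod_{k<k'}(k-k')$ is a nonzero constant, so Cramer's rule gives each $c^a(m)$ as a fixed polynomial combination of the $P_k(m-k-1)$. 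The two arguments are close cousins --- the paper's finite-difference formula is precisely the last row of your inverse Vandermonde matrix, since $\prod_{k'\neq k}(x_k-x_{k'})=(-1)^k k!\,(p-k)!$ --- but yours dispenses with the induction, treats all coefficients at once, and makes visible why exactly $p+1$ diagonals suffice. Your closing caveat about the domain is well taken and does not put you at a disadvantage relative to the paper: the system must be available down to $m=0$, which requires reading the hypothesis $f^{n+k+1}(n)\in\field[n]$ over the full range $n\geq -k-1$ (indeed, with the hypothesis restricted to $n\geq 0$ the lemma is false: take $p=0$ and $c^0$ supported at $k=0$); the paper's proof relies on the same reading, since its shifted statement $f^{n+1}(n-k)\in\field[n]$ is needed down to $n=-1$.
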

\begin{proof}
Let us assume by induction that the claim holds for all $p' < p$. Induction starts at $p=0$. We denote by $\delta$ the $\field$-linear endomorphism of $\field^\ZZ$ defined by $(\delta g)(n) = g(n) - g(n-1)$ for $g \in \field^\ZZ$ and $n \in \ZZ$. We denote by $\delta^p$ the $p$-th power of $\delta$. By hypothesis, $f^{n+k+1}(n) \in \field[n]$ for all $k \geq 0$. Shifting $n$ by $k$, we obtain that $f^{n+1}(n-k) \in \field[n]$ for all $k \geq 0$. We obtain in particular that the value
\begin{equation*}
	\sum_{k = 0}^p \frac{(-1)^k}{k! (p-k)!} f^{n+1}(n-k) = \frac{(\delta^p f^{n+1})(n)}{p!} = c^p(n+1)
\end{equation*}
depends polynomially on $n$. It follows that $f_\ast^{n+k+1}(n) \in \field[n]$ for all $k \geq 0$, where $f_\ast \in \field^\ZZ$ is defined for $n \in \ZZ$ by $f_\ast^k(n) = \sum_{a=0}^{p-1} c^a(k) n^k$. One concludes by using the induction hypothesis.
\end{proof}

\subsection{Interpretation}
Let $\psi$ be a colouring, and let $d \in \ZZ$. We designate by $d^+$ the non-negative integer $\max(d,0)$. We say that an element $x$ in $\Uh$ is of degree $d$ if for every $k \geq d^+$ and for every $n \in \ZZ$, $x. b_k$ is equal to $\psi(x)^k(n) b_{k-d}$ in $\Vermah$ for some $\psi(x)^k(n) \in \PSR$, and if $x.b_k$ is zero for all $0 \leq k < d^+$. Let us remark that the degree $d$ of $x$ is unique (by definition of $\Uh$ the element $x$ is zero if and only if it acts by zero on $\Vermah$ for all $n \in \ZZ$). Let us also remark that $H$ and $X^\pm$ are of degrees $0$ and $\pm 1$ in $\Uh$, respectively. As $\Uh$ is topologically generated by $H, X^-, X^+$, it then follows from colouring axioms (C2), (C3), and from the definition of $\Vermah$, that the values $\psi(x)^k(n)$ ($k \geq d^+$, $n \in \ZZ$) define a sequence $\psi(x)$ in $\VCoeff[d^+]$.
\begin{prop}
\label{prop:Eqh/Interpretation}
	Let $\psi$ be a colouring, let $d \in \ZZ$, and let $\xi \in \SCoeff[d^+]$ be a regular sequence. The series $\sum_{a \geq d^+} (X^-)^{a-d} (X^+)^a \xi^a (H)$ converges to a unique element $x$ in $\Uh$ -- for the $h$-adic topology, and the sequence $\xi$ is a solution of the equation $\psi \ltimes \xi = \psi(x)$.
\end{prop}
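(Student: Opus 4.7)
The plan is to verify each claim directly using continuity of the $\Uh$-action on the Verma modules, and by carefully unwinding the definition of $\psi \ltimes \xi$.

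First, to make sense of the series, I would show that each term $(X^-)^{a-d}(X^+)^a \xi^a(H)$ is a well-defined element of $\Uh$. Since $\xi$ is regular, for each $a \geq d^+$ and each $m \geq 0$ the function $\xi^a_m : \ZZ \to \field$ is polynomial in $n$, so $\xi^a_m(H)$ is a well-defined polynomial in $H \in \Uh$, and $\xi^a(H) = \sum_{m \geq 0} \xi^a_m(H) h^m$ lies in $\field[H] \hhh \subset \Uh$ by completeness. Next I would argue convergence: summability of $\xi$ means the $h$-adic valuation of $\xi^a$ in $\field^\ZZ \hhh$ tends to $+\infty$ with $a$, hence so does the valuation of $\xi^a(H)$ in $\Uh$, and the valuation of the entire term $(X^-)^{a-d}(X^+)^a \xi^a(H)$ follows suit. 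Since $\Uh$ is complete and Hausdorff (lemma \ref{lem:Preliminaries/Uh/topologically-free}), the series converges to a unique element $x \in \Uh$.

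Second, I would compute $x.b_k$ by using that the action map $\Uh \to \End_{\PSR}(\Vermah)$ is continuous and $\PSR$-linear, so $x.b_k = \sum_{a \geq d^+} (X^-)^{a-d}(X^+)^a \xi^a(H) . b_k$, understood as a limit of partial sums. A routine iteration using the definition of $\Vermah$ gives, for $a \leq k$,
\begin{equation*}
(X^+)^a . b_k \, = \, \Bigl( \prod_{b = k-a+1}^{k} \psi^b(n) \Bigr) b_{k-a},
\end{equation*}
and $(X^+)^a . b_k = 0$ for $a > k$ (by colouring axiom (C2), the action hits $b_0$ and then vanishes — more concretely the formula above is zero because the factor $\psi^0$ never appears; the point is simply $X^+.b_0 = 0$). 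Combining this with $\xi^a(H).b_k = \xi^a(n - 2k) b_k$ (applied first) and $(X^-)^{a-d}.b_{k-a} = b_{k-d}$ yields, for $k \geq d^+$,
\begin{equation*}
(X^-)^{a-d}(X^+)^a \xi^a(H) . b_k \, = \, \Bigl( \prod_{b = k-a+1}^{k} \psi^b(n) \Bigr) \xi^a(n - 2k) \, b_{k-d}
\end{equation*}
for $d^+ \leq a \leq k$, and $0$ otherwise. Summing over $a$ gives exactly $(\psi \ltimes \xi)^k(n) b_{k-d}$. Moreover for $0 \leq k < d^+$ every term vanishes because $a \geq d^+ > k$ forces $(X^+)^a.b_k = 0$, so $x.b_k = 0$.

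Thus $x$ is of degree $d$ in $\Uh$ in the sense of the preceding paragraph, and the sequence $\psi(x)$ encoding its action coincides with $\psi \ltimes \xi$, which is the desired equation. There is no real obstacle here: the only step that needs any care is reconciling convergence in $\Uh$ (where the series is infinite) with the truncation to the finite sum $a \leq k$ that occurs when acting on $b_k$, which is handled cleanly by continuity together with the fact that all but finitely many partial sums agree when evaluated on a fixed $b_k$.
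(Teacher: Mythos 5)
Your proof is correct and follows essentially the same route as the paper's: convergence of the series is deduced from regularity and summability of $\xi$ together with the topological freeness (hence completeness and Hausdorffness) of $\Uh$, and the identity $\psi \ltimes \xi = \psi(x)$ is obtained by evaluating the series on each $b_k$ and observing that the action truncates to the finite sum $d^+ \leq a \leq k$. You merely spell out the term-by-term computation of $(X^-)^{a-d}(X^+)^a\xi^a(H).b_k$ that the paper leaves implicit in the phrase ``it follows from the definition of $\psi(x)$.''
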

\begin{proof}
It follows from regularity and summability that $\xi^a(n)$ ($a \geq d^+$) define a sequence in $\field[n] \hhh$, converging to zero (for the $h$-adic topology). Therefore, $\xi^a(H)$ defines an element in $\Uh$ which tends to zero (for the $h$-adic topology), as $a$ goes to infinity. As $\Uh$ is topologically free (lemma \ref{lem:Preliminaries/Uh/topologically-free}), the series $\sum_{a \geq d^+} (X^-)^{a-d} (X^+)^a \xi^a(H)$ then converges to a unique element $x$ in $\Uh$, of degree $d$. It follows from the definition of $\psi(x)$ that
\begin{equation*}
	\sum_{a = d^+}^\infty (X^-)^{a-d} (X^+)^a \xi^a(H) . b_k \, = \, \psi(x)^k(n) b_{k-d}
\end{equation*}
holds in the representation $\Vermah$ for all $k \geq d^+$ and all $n \in \ZZ$. In other words,
\begin{equation*}
	\sum_{a = d^+}^k \left( \prod_{b = k-a+1}^k \psi^b(n) \right) \xi^a (n-2k) \, = \, \psi(x)^k(n)
\end{equation*}
holds for all $k \geq d^+$ and all $n \in \ZZ$.
\end{proof}

\subsection{Regular solutions}
Let $\psi$ be a colouring. As already mentioned, proving that $\Uh$ is a formal deformation of $\U$ amounts to proving that there is a relation in $\Uh$ which deforms the relation $[X^+,X^-] = H$ of $\U$. In particular (equivalently, in fact), we want to be able to express the element $X^+X^-$ in $\Uh$ as a linear combination (more precisely, as a limit of linear combinations) of the monomials $(X^-)^a (X^+)^b H^c$ ($a,b,c \geq 0$). In view of proposition \ref{prop:Eqh/Interpretation}, and since $\psi(X^+X^-) = \psi[+1]$, this leads us to look for regular solutions $\xi$ of $\psi \ltimes \xi = \psi[+1]$.\par
We prove here that the equation $\psi \ltimes \xi = \psi[+1]$ admits a regular summable solution $\xi$ if and only if the colouring $\psi$ is regular. We moreover prove that the equation $\psi \ltimes \xi = \theta$ has a unique regular summable solution $\xi$, for every regular sequence $\theta$ of Verma type, provided that $\psi$ is regular. This is the key technical result of this paper.
\begin{prop}
\label{prop:Eqh/Regular-solutions}
	Let $\psi$ be colouring.
	\begin{enumerate}[1.]
		\item[\emph{1.}] The equation $\psi \ltimes \xi = \psi[+1]$ admits a regular solution $\xi$ in $\SCoeff[0]$ if and only if $\psi$ is regular.
		\item[\emph{2.}] Let us suppose that $\psi$ is regular. For each $d \geq 0$ the map $\xi \mapsto \psi \ltimes \xi$ induces a $\PSR$-linear isomorphism from regular sequences in $\SCoeff$ to regular sequences in $\VCoeff$.
	\end{enumerate}
\end{prop}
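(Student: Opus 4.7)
The equation $\psi \ltimes \xi = \theta$ is lower-triangular in the index $k$: the term in the sum defining $(\psi \ltimes \xi)^k(n)$ involving $\xi^k(n-2k)$ has coefficient $\prod_{b=1}^k \psi^b(n)$, which modulo $h$ equals $k!\,n(n-1)\cdots(n-k+1)$ — a nonzero polynomial in $n$, hence not a zero divisor in $\field[n]\hhh$. The plan is to invert this recurrence, solving for $\xi^k$ by induction on $k$ and, within each $k$, inductively in the $h$-order $m$.

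\textbf{Part 2.} $\PSR$-linearity is clear, and injectivity is immediate from triangularity: the equation $\psi \ltimes \xi = 0$ forces $\xi^d, \xi^{d+1}, \ldots$ to vanish in turn. For surjectivity, given regular $\theta \in \VCoeff$, I construct $\xi^k$ by solving
\[
\Bigl(\prod_{b=1}^k \psi^b(n)\Bigr) \xi^k(n-2k) \;=\; \theta^k(n) - \sum_{a=d}^{k-1} \Bigl(\prod_{b=k-a+1}^k \psi^b(n)\Bigr) \xi^a(n-2k).
\]
The key step is to verify the right-hand side is divisible in $\field[n]\hhh$ by the leading coefficient, i.e.\ vanishes at $n \in \{0,1,\dots,k-1\}$ modulo $h$, with analogous statements at higher orders. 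I split the check into two ranges. For $n \in [k-d, k-1]$ the first Verma axiom yields $\theta^k(n) = 0$ and every summand carries a factor $\psi^{n+1}(n) = 0$. For $n = n_0 \in [0, k-d-1]$, the terms with $a \geq k - n_0$ vanish by axiom (C2), and for the remaining terms the identity $\prod_{b=k-a+1}^k \psi^b(n_0) = \prod_{b'=k-n_0-a}^{k-n_0-1} \psi^{b'}(-n_0-2)$ — a direct consequence of axiom (C3) — combined with $\theta^k(n_0) = \theta^{k-n_0-1}(-n_0-2)$ (second Verma axiom) and the recurrence applied at the smaller index $k - n_0 - 1$ provide the cancellation. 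The resulting $\xi^k(m)$ is then polynomial in $m$; regularity and the summability $\xi^a_m = 0$ for $a \gg 0$ follow from a degree count, using that modulo $h$ the coefficient of $\xi^a$ has degree $2a$ in $k$ after substituting $n = m + 2k$, which combined with the bounded total degree of $\theta^k_m(n) \in \field[k,n]$ forces $\xi^a_m$ to vanish once $a$ exceeds that degree.

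\textbf{Part 1.} The $(\Leftarrow)$ direction reduces to Part 2 applied to $\theta = \psi[+1]$: axiom (C3) provides the Verma condition $(\psi[+1])^{n+k+1}(n) = (\psi[+1])^k(-n-2)$, the first Verma condition is vacuous for $d = 0$, and regularity of $\psi[+1]$ is immediate from that of $\psi$. For $(\Rightarrow)$, assume $\xi$ is regular with $\psi \ltimes \xi = \psi[+1]$. I establish regularity of $\psi$ via the criterion stated just before Remark \ref{rem:Eqh/Definition}: quasi-regularity together with a uniform degree bound in $n$. Quasi-regularity follows by induction on $k$ from the explicit recurrence $\psi^{k+1}(n) = \sum_{a=0}^k \prod_{b=k-a+1}^k \psi^b(n)\, \xi^a(n-2k)$, since $\xi$ regular has polynomial-in-argument components. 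A finer induction tracking the $n$-degree, supplied by the uniform degree bound in the regularity of $\xi$, yields the required bound. Once both hold, Lemma \ref{lem:Eqh/Definition} applied through axiom (C3) to the sequence $k \mapsto \psi^k_m(n)$ upgrades polynomial-in-$n$ to polynomial-in-$(k,n)$, establishing regularity of $\psi$ by Remark \ref{rem:Eqh/Definition}.

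\textbf{Main obstacle.} The delicate point is the Verma compatibility argument in Part 2 at higher orders of $h$: at order $m$ new cross terms involving $\psi_{m_i}$ with $m_i > 0$ and $\xi_{m_0}$ with $m_0 < m$ enter the recurrence, and propagating divisibility of the right-hand side by the leading coefficient throughout the $h$-adic filtration calls for an interwoven induction in $k$ and in $m$, with the degree bookkeeping tightly controlled to conclude both regularity and summability of $\xi$ from the corresponding properties of $\theta$ and $\psi$.
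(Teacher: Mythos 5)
Your overall architecture coincides with the paper's: the same triangular recursion in $k$ with leading coefficient $\prod_{b=1}^k\psi^b(n)$, the same divisibility check at $n'\in\{0,\dotsc,k-1\}$ via axioms (C2)--(C3) and the Verma conditions, uniqueness from quasi-regularity plus triangularity, summability from a degree count exploiting that $\psi^b_0(n)$ has degree exactly one in $n$, and the reduction of Part~1 ($\Leftarrow$) to the surjectivity statement with $\theta=\psi[+1]$. The only genuine structural difference is that you run the recursion for arbitrary $d$ directly (absorbing the range $n\in[k-d,k-1]$ with the first Verma condition), whereas the paper treats $d=0$ and reduces the general case via the shift $f\mapsto f\{-1\}$, $(f\{-1\})^k(n)=\psi^k(n)f^{k-1}(n)$; your variant is legitimate and arguably more direct.

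There is, however, one genuine gap, in the ($\Rightarrow$) direction of Part~1. After quasi-regularity of $\psi$ is obtained, you invoke ``a finer induction tracking the $n$-degree, supplied by the uniform degree bound in the regularity of $\xi$,'' but this induction does not close as stated. At order $h^m$ the recurrence for $\psi^{k+1}_m(n)$ contains, besides terms controlled by the induction hypothesis on $m$, the contributions in which some factor $\psi^b_{m_b}$ carries the full order $m$; these amount to $\sum_{a\le a(0)}\sum_{b=k-a+1}^{k}\bigl(\prod_{b'\neq b}\psi^{b'}_0(n)\bigr)\psi^b_m(n)\,\xi^a_0(n-2k)$, i.e.\ a whole window $\psi^{k-a(0)+1}_m,\dotsc,\psi^k_m$ of unknowns with nonconstant polynomial coefficients, and a naive induction on $k$ then yields only a degree bound growing linearly in $k$. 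The missing idea is to first pin down $\xi$ modulo $h$: reducing $\psi\ltimes\xi=\psi[+1]$ modulo $h$ and using (C1), the sequence $\xi_0$ is a quasi-regular solution of $\Ncol\ltimes\xi_0=\Ncol[+1]$, hence by uniqueness (which you do prove in Part~2) it equals the evident solution $\xi^0_0(n)=n$, $\xi^1_0(n)=1$, $\xi^a_0=0$ for $a\ge 2$. Only with this identification does the order-$m$ recurrence collapse to $\psi^{k+1}_m(n)=\psi^k_m(n)+\xi^0_m(n-2k)+g^k(n)$ with $\deg g^k$ uniformly bounded, from which the required bound on $\deg_n\psi^k_m$ follows by induction on $k$. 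You have all the ingredients, but this step must be made explicit for the argument to be complete.
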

\begin{proof}[of proposition \ref{prop:Eqh/Regular-solutions}] The proof of the proposition is in six steps.

\subsubsection*{\sc Step 1.} Let $\xi \in \SCoeff[0]$. If $\psi$ and $\xi$ are regular, then $\psi \ltimes \xi$ is.
\begin{proof}
Let $\theta$ designate the sequence $\psi \ltimes \xi$:
\begin{equation*}
	\theta^k(n) \, = \, \sum_{a=0}^k \left( \prod_{b = k-a+1}^k \psi^b(n) \right) \xi^a(n-2k), \ \text{for $k \geq 0$ and $n \in \ZZ$.}
\end{equation*}
We suppose that $\psi$ and $\xi$ are regular. They are in particular quasi-regular, and so then is the sequence $\theta$. Let $m \in \NN$. The sequence $\xi$ being by hypothesis summable, there is $a(m) \geq 0$ such that $\xi^a(n) \in h^{m+1} \field[n] \hhh$ for all $a > a(m)$. Therefore, the equality
\begin{equation*}
	\theta^k(n) \, = \, \sum_{a=0}^{a(m)} \left( \prod_{b = k-a+1}^k \psi^b(n) \right) \xi^a(n-2k)
\end{equation*}
holds in $\field[n] \hhh / h^{m+1} \field[n] \hhh$ for all $k \geq a(m)$. The sequence $\psi$ being by assumption regular, it follows that the degree of the polynomial $\theta^k_m(n)$ is a function of $k$ bounded above.
\end{proof}

\subsubsection*{\sc Step 2.} Let $\theta \in \VCoeff[0]$. If $\psi$ and $\theta$ are quasi-regular, then the equation $\psi \ltimes \xi = \theta$ admits a quasi-regular solution $\xi$ in $\Coeff[0]$.
\begin{proof}
We suppose that $\psi$ and $\theta$ are quasi-regular. Let $k \geq 0$ and let us assume by induction that there exist $\xi^0(n), \xi^1(n), \dotsc, \xi^{k-1}(n) \in \field[n] \hhh$ verifying
\begin{equation}
\label{eq:Eqh/Regular-solutions/Step2/1}
	\sum_{a=0}^l \left( \prod_{b = l-a+1}^l \psi^b(n) \right) \xi^a(n-2l) \, = \, \theta^l(n)
\end{equation}
for all $0 \leq l \leq k-1$. It follows from colouring axioms (C1) and (C2) that $\psi^b(n)$ is equal to $(n-b+1) f^b(n)$ for some invertible element $f^b(n)$ in $\field[n] \hhh$. Therefore, there exists $\xi^k(n)$ in $\field[n] \hhh$ such that
\begin{equation*}
	\sum_{a=0}^{k-1} \left( \prod_{b = k-a+1}^k \psi^b(n) \right) \xi^a(n-2k) \, + \, \left( \prod_{b=1}^k \psi^b(n) \right) \xi^k(n-2k) \, = \, \theta^k(n)
\end{equation*}
if only if the equality
\begin{equation}
\label{eq:Eqh/Regular-solutions/Step2/2}
	\sum_{a=0}^{k-1} \left( \prod_{b = k-a+1}^k \psi^b(n') \right) \xi^a(n'-2k) \, = \, \theta^k(n')
\end{equation}
holds in $\PSR$ for all $n' \in \{0,1,\dotsc,k-1\}$. For such $n'$, the left-hand side of \eqref{eq:Eqh/Regular-solutions/Step2/2} is equal to
\begin{align*}
	& \sum_{a=0}^{k-n'-1} \left( \prod_{b = k-a+1}^k \psi^b(n') \right) \xi^a(n'-2k) & \text{by (C2)} \\
	= \,& \sum_{a=0}^{k-n'-1} \left( \prod_{b = k-a+1}^k \psi^{b-n'-1}(-n'-2) \right) \xi^a(n'-2k) & \text{by (C3)} \\
	= \,& \sum_{a=0}^{k-n'-1} \left( \prod_{b=k-n'-a}^{k-n'-1} \psi^b(-n'-2) \right) \xi^a((-n'-2)-2(k-n'-1)).&
\end{align*}
The sequence $\theta$ being of Verma type, it follows from \eqref{eq:Eqh/Definition/Verma2} that for all $n' \in \{0,1,\dotsc,k-1\}$, equality \eqref{eq:Eqh/Regular-solutions/Step2/2} holds if and only if the following one does:
\begin{equation*}
	\sum_{a=0}^{k-n'-1} \left( \prod_{b=k-n'-a}^{k-n'-1} \psi^b(-n'-2) \right) \xi^a((-n'-2)-2(k-n'-1)) \, = \, \theta^{k-n'-1}(-n'-2).
\end{equation*}
The latter is equality \eqref{eq:Eqh/Regular-solutions/Step2/1} for $l = k-n'-1$ and $n = -n'-2$. One then concludes using the induction hypothesis.
\end{proof}

\subsubsection*{\sc Step 3.} Let $\theta \in \VCoeff[0]$. If $\psi$ and $\theta$ are regular, then the equation $\psi \ltimes \xi = \theta$ admits a regular solution $\xi$ in $\SCoeff[0]$.
\begin{proof}
We suppose that $\psi$ and $\theta$ are regular. It follows from step 2 that there exists a quasi-regular sequence $\xi$ in $\Coeff[0]$ such that $\psi \ltimes \xi = \theta$. It suffices to prove that $\xi$ is summable (a quasi-regular summable sequence is in particular regular). Let $m \geq 0$ and let us assume by induction that there is $a(m) \geq 0$ such that the polynomial $\xi_{m'}^a(n)$ is zero for all $a \geq a(m)$ and for all $m' < m$. We designate by $\tilde \theta$ the quasi-regular sequence in $\Coeff[a(m)]$ defined for $k \geq a(m)$ by
\begin{equation}
\label{eq:Eqh/Regular-solutions/Step3/1}
	\tilde \theta^k(n) \, = \, \theta^k(n) \, - \, \sum_{a=0}^{a(m)-1} \left( \prod_{b = k-a+1}^k \psi^b(n) \right) \xi^a(n-2k)
\end{equation}
Using the induction hypothesis, it then follows from $\psi \ltimes \xi = \theta$ that
\begin{equation}
\label{eq:Eqh/Regular-solutions/Step3/2}
	\sum_{a=a(m)}^k \left( \prod_{b = k-a+1}^k \psi_0^b(n) \right) \xi_m^a(n-2k) \, = \, \tilde \theta_m^k(n)
\end{equation}
for all $k \geq a(m)$. As $\psi$ and $\theta$ are by assumption both regular, it besides follows from \eqref{eq:Eqh/Regular-solutions/Step3/1} that there is $p \geq 0$ such that the degree of the polynomial $\tilde \theta^k_m(n)$ is at most $p$ for all $k$. On the other hand, according to the first colouring axiom, the polynomial $\psi_0^b(n)$ is of degree $1$ for all $b \geq 1$. Therefore, equality \eqref{eq:Eqh/Regular-solutions/Step3/2} implies, by induction on $k$, that $p(k) + k \leq p$ for all $k \geq a(m)$, where $p(k)$ designates the degree of the polynomial $\xi^k_m(n)$. It follows that $\xi^k_m(n)$ is zero for sufficiently large $k$.
\end{proof}

\subsubsection*{\sc Step 4.} Let $\theta \in \VCoeff[0]$. The equation $\psi \ltimes \xi = \theta$ admits at most one quasi-regular solution $\xi$.
\begin{proof}
Let $\xi$ be a solution in $\Coeff[0]$ of the equation $\psi \ltimes \xi = 0$:
\begin{equation}
\label{eq:Eqh/Regular-solutions/unicity}
	\sum_{a=0}^k \left( \prod_{b = k-a+1}^k \psi^b(n) \right) \xi^a(n-2k) \, = \, 0, \ \text{for all $k \geq 0$ and all $n \in \ZZ$.}
\end{equation}
We suppose that $\xi$ is quasi-regular. Let $k \geq 0$ and let us assume by induction that $\xi^{k'}$ is zero for all $k' < k$. It then follows from \eqref{eq:Eqh/Regular-solutions/unicity} that $(\prod_{b=1}^k \psi^b(n)) \xi^k(n-2k)$ is zero for all $n \in \ZZ$. On the other hand, according to the first colouring axiom, $\psi^b(n)$ is zero only if $n=b-1$. Therefore, $\xi^k(n-2k)$ is zero for infinitely many values of $n$. This implies that $\xi^k$ is zero, as $\xi^k(n) \in \field[n] \hhh$ according to the quasi-regularity assumption.
\end{proof}

\subsubsection*{\sc Step 5.} If the equation $\psi \ltimes \xi = \psi[+1]$ admits a regular solution $\xi$ in $\SCoeff[0]$, then $\psi$ is regular.
\begin{proof}
Let $\xi$ be a solution in $\Coeff[0]$ of the equation $\psi \times \xi = \psi[+1]$:
\begin{equation}
\label{eq:Eqh/Regular-solutions/Step5}
	\sum_{a=0}^k \left( \prod_{b = k-a+1}^k \psi^b(n) \right) \xi^a(n-2k) \, = \, \psi^{k+1}(n), \ \text{for all $k \geq 0$ and all $n \in \ZZ$.}
\end{equation}
We suppose that $\xi$ is regular and summable. It is in particular quasi-regular, and it follows from \eqref{eq:Eqh/Regular-solutions/Step5}, by induction on $k$, that $\psi$ is quasi-regular.\par
Let us recall that $\Ncol = (\Ncol^k)_{k \geq 1}$ designates the colouring with values in $\field^\ZZ$, defined by $\Ncol^k(n) = k(n-k+1)$ for $n \in \ZZ$. There is an evident quasi-regular solution $f$ in $\Coeff[0]$ of the equation $\Ncol \ltimes f = \Ncol[+1]$, which is given by $f^0(n) = n$, $f^1(n) = 1$ and $f^k(n) = 0$ for $k \geq 2$.  This is the unique quasi-regular solution (step 4). On the other hand, it follows from the first colouring axiom that $(\xi^k_0)_{k \geq 0}$ is another solution, also quasi-regular, since $\xi$ is by assumption. This proves that $\xi_0^0(n) = n$, $\xi_0^1(n) = 1$ and $\xi^k_0(n) = 0$ for all $k \geq 2$.\par
Let $m \geq 0$ and let us assume by induction that there is $p \geq 0$ such that the degree of the polynomial $\psi^k_{m'}(n)$ is at most $p$ for all $k \geq 1$ and for all $m' < m$. As $\xi$ is by assumption summable, there is $a(m) \geq 0$ such that $\xi^a(n) \in h^{m+1} \field[n] \hhh$ for all $a > a(m)$. Let $p' \geq 0$ such that the degree of the polynomial $\xi^a_{m'}(n)$ is at most $p'$ for all $a \leq a(m)$ and all $m' \leq m$. The following equation holds in $\PSR / h^{m+1} \PSR$ for all $k \geq a(m)$:
\begin{equation*}
	\sum_{a=0}^{a(m)} \left( \prod_{b = k-a+1}^k \psi^b(n) \right) \xi^a(n-2k) \, = \, \psi^{k+1}(n).
\end{equation*}
Using that $\xi^1_0(n) = 1$, that $\xi^k_0(n) = 0$ for $k \geq 2$, and using the induction hypothesis, it follows that for all $k \geq a(m)$, $\psi_m^{k+1}(n) = \xi^0_m(n-2k) + \psi_m^k(n) + g^k(n)$ for some $g^k(n) \in \field[n]$ of degree at most $a(m) p + p'$. This proves, by induction on $k$, that the degree of the polynomial $\psi^k_m(n)$ is a function of $k$ bounded above
\end{proof}

\subsubsection*{\sc Step 6.} For $f \in \Coeff$ ($d \in \NN$) let $f\{-1\}$ designate the sequence in $\Coeff[d+1]$ defined by $(f\{-1\})^k(n) = \psi^k(n) f^{k-1}(n)$ for $k \geq d+1$ and $n \in \ZZ$.
\begin{enumerate}[\sc i.]
\item The equality $\psi \ltimes (\xi[-1]) = (\psi \ltimes \xi)\{-1\}$ holds for all $\xi \in \Coeff$.
\item Let us suppose that $\psi$ is regular. The map $f \mapsto f\{-1\}$ induces a $\PSR$-linear isomorphism from regular sequences in $\VCoeff$ to regular sequences in $\VCoeff[d+1]$.
\end{enumerate}
\begin{proof}
Point {\sc i} is straightforward calculations. Let us prove point {\sc ii}. We suppose that $\psi$ is regular. It follows from colouring axioms (C2) and (C3) that if $f$ is of Verma type, then $f\{-1\}$ is. Also, as the colouring $\psi$ is by assumption regular, $f$ regular implies $f\{-1\}$ regular. This proves that $f \mapsto f\{-1\}$ defines a $\PSR$-linear map, from regular sequences in $\VCoeff$ to regular sequences in $\VCoeff[d+1]$. Let $\theta$ be a regular sequence in $\VCoeff[d+1]$. Then $n-k$ divides $\theta^{k+1}(n)$ in $\field[n] \hhh$ for all $k \geq d$. On the other hand, it follows from colouring axioms (C1) and (C2), and as $\psi$ is by assumption regular, that for all $k \geq 0$, $\psi^{k+1}(n) = (k+1)(n-k) g^k(n)$ for some unique invertible element $g^k(n)$ in $\field[n] \hhh$. Therefore, for all $k \geq d$, there is a unique element $(\theta\{+1\})^k(n)$ in $\field[n] \hhh$ such that $\theta^{k+1}(n) = \psi^{k+1}(n) (\theta\{+1\})^k(n)$. In other words, there is a unique quasi-regular sequence $\theta\{+1\}$ in $\Coeff$ such that $\theta = (\theta\{+1\})\{-1\}$. Let $\tilde g^k(n)$ ($k \geq 0$) designates the inverse of $g^k(n)$ in $\field[n] \hhh$. It follows from the definition of $\theta\{+1\}$ that
\begin{equation}
\label{eq:Eqh/Regular-solutions/Step6}
	(k+1)(n-k) (\theta\{+1\})^k(n) = \tilde g^k(n) \theta^{k+1}(n), \ \text{for all $k \geq d$.}
\end{equation}
As the colouring $\psi$ is regular, so then is the sequence $\tilde g = (\tilde g^k)_{k \geq 0}$. As $\theta$ is also regular, it follows from \eqref{eq:Eqh/Regular-solutions/Step6} that $\theta\{+1\}$ is. Let us fix $k \geq d$ and $n \geq 0$. As $\theta$ is of Verma type, it follows from \eqref{eq:Eqh/Regular-solutions/Step6} that $(k+1)(n-k) (\theta\{+1\})^k(n)$ is zero if $n+1 \leq k+1 \leq n+d+1$. In particular, $(\theta\{+1\})^k(n)$ is zero if $n+1 \leq k \leq n+d$. Colouring axiom (C3) implies that $\tilde g^{n+k+1}(n) = \tilde g^k(-n-2)$. Using that $\theta$ is of Verma type, it then follows from \eqref{eq:Eqh/Regular-solutions/Step6} that $(\theta\{+1\})^{n+k+1}(n) = (\theta\{+1\})^k(-n-2)$. We thus have proved that $\theta \mapsto \theta\{+1\}$ defines a map from regular sequences in $\VCoeff[d+1]$ to regular sequences in $\VCoeff$, and that $\{+1\}$ is a right inverse of the map $\{-1\}$. It follows by definition that $\{+1\}$ is also a left inverse.
\end{proof}

\subsubsection*{Conclusion.} Step 3 for $\theta = \psi[+1]$, together with step 5, prove point 1 of the proposition. Let us suppose that $\psi$ is regular. Step 1 then implies that $\xi \mapsto \psi \ltimes \xi$ defines a $\PSR$-linear map from regular sequences in $\SCoeff[0]$ to regular sequences in $\VCoeff[0]$. Steps 3 and 4 prove that the map is surjective and injective, respectively. This establishes point 2 of the proposition for $d = 0$. The general case follows, by induction on $d$, from step 6.
\end{proof}
\begin{rem}
	Let $\psi$ a colouring and let $\theta \in \VCoeff$ ($d \in \NN$). We do not assume here that $\psi$ and $\theta$ are regular. We see from the proof of proposition \ref{prop:Eqh/Regular-solutions} (steps 2 and 6) that the equation $\psi \ltimes \xi = \theta$ admits as many solutions $\xi$ in $\Coeff$ as they are choices for the values $\xi^{k+d}(-2k), \dotsc, \xi^{k+d}(-k-1)$ ($k \geq 0$). We see in particular that if not regular, a solution is not unique. It may be interesting to find an explicit condition on those values of $\xi$ characterising regularity for the solution $\xi$, when $\psi$ and $\theta$ are regular. 
\end{rem}

\section{Coloured Kac-Moody algebras of rank one}
\label{CKM}
We present here the main results of this paper. We prove that $\Uh$ is a formal deformation of $\U$ if and only if the colouring $\psi$ is regular (theorem \ref{thm:CKM/Formal-deformations}). We give a Chevalley-Serre presentation of $\Uh$ for $\psi$ regular (theorem \ref{thm:CKM/Generators-and-relations}). We show that the constant formal deformation $\U \hhh$ and the quantum algebra $\Uq$ can both be realized as coloured Kac-Moody algebras (theorem \ref{thm:CKM/Realizations}). We prove that coloured Kac-Moody algebras are $\blie$-trivial deformations of $\U$, and admit unique $\blie$-trivializations (theorem \ref{thm:CKM/b-triviality}). We prove that regular colourings classify $\hlie$-trivial formal deformations of the $\alie$-algebra $\U$ (theorem \ref{thm:CKM/Classification}). As a corollary, we obtain a rigidity result for $\U$; namely, every $\hlie$-trivial formal deformation of the $\alie$-algebra $\U$ is also $\blie$-trivial, and admits a unique $\blie$-trivialization (corollary \ref{cor:CKM/b-triviality}).

\subsection{Formal deformations of $\U$}
We recall that a colouring $\psi$ is said regular if $\psi^k(n) \in \field[k,n] \hhh$ (see remark \ref{rem:Eqh/Definition}). For $\psi$ regular, we call the algebra $\Uh$ a \emph{coloured Kac-Moody algebra}.
\begin{thm}
\label{thm:CKM/Formal-deformations}
	Let $\psi$ be a colouring. The following three assertions are equivalent.
	\begin{enumerate}
		\item The colouring $\psi$ is regular.
		\item The algebra $\Uh$ is a formal deformation of the $\field$-algebra $\U$.
		\item The algebra $\Uh$ is a formal deformation of the $\alie$-algebra $\U$.
	\end{enumerate}
\end{thm}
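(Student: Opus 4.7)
The plan is to prove (iii) $\Rightarrow$ (ii) trivially, the equivalence (i) $\Leftrightarrow$ (iii) via the equation machinery of section \ref{Eqh}, and (ii) $\Rightarrow$ (iii) by invoking the Hopfian property of the Noetherian algebra $\U$. The central observation is that (iii) amounts precisely to the canonical $\alie$-algebra surjection $\pi:\hzero\Uh\to\U$ of lemma \ref{lem:Preliminaries/Uh/projection-map} being an isomorphism, and this will be controlled by the availability of an explicit PBW-expansion of $X^+X^-$ in $\Uh$. Forgetting the $\alie$-structure gives (iii) $\Rightarrow$ (ii) immediately.

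For (i) $\Rightarrow$ (iii), I would apply proposition \ref{prop:Eqh/Regular-solutions} with $\theta=\psi[+1]$ to obtain a regular summable $\xi\in\SCoeff[0]$ solving $\psi\ltimes\xi=\psi[+1]$. Proposition \ref{prop:Eqh/Interpretation} then assembles $\sum_{a\geq 0}(X^-)^a(X^+)^a\xi^a(H)$ into an element of $\Uh$ whose action on each $\Vermah$ equals $\psi[+1]=\psi(X^+X^-)$, hence by the very definition of $\Uh$ it equals $X^+X^-$ itself. Reducing this identity modulo $h$ and using the values $\xi^0_0(H)=H$, $\xi^1_0(H)=1$, $\xi^a_0(H)=0$ for $a\geq 2$ (these are read off Step~5 of the proof of proposition \ref{prop:Eqh/Regular-solutions}), one obtains $[X^+,X^-]=H$ in $\hzero\Uh$. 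Thus $\hzero\Uh$ is a quotient of $\Ua/([X^+,X^-]-H)=\U$ by a map sending $H, X^\pm$ to themselves; composing with $\pi$ yields the identity of $\U$, forcing $\pi$ to be an isomorphism.

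Conversely, for (iii) $\Rightarrow$ (i), if $\pi$ is an isomorphism then the PBW monomials $(X^-)^a(X^+)^b H^c$ descend to a $\field$-basis of $\hzero\Uh$, and topological freeness of $\Uh$ (lemma \ref{lem:Preliminaries/Uh/topologically-free}) lifts them to a topological $\PSR$-basis of $\Uh$. Since the ideal $I_h(\psi)$ respects the grading $\deg H=0$, $\deg X^\pm=\pm 1$ (it is defined through the degree-shifts of the Verma actions), $\Uh$ is $\ZZ$-graded and $X^+X^-$ is homogeneous of degree $0$. Its unique PBW expansion therefore takes the form $X^+X^- = \sum_{a\geq 0}(X^-)^a(X^+)^a\xi^a(H)$ with $\xi^a(H)\in\field[H]\hhh$; the $h$-adic convergence forces $\xi^a\to 0$, so $\xi$ is summable, and summability combined with polynomial entries yields regularity (for each $m$ only finitely many $\xi^a_m$ are non-zero, so their degrees in $H$ are uniformly bounded in $a$). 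Proposition \ref{prop:Eqh/Interpretation} then gives $\psi\ltimes\xi=\psi[+1]$, and the ``only if'' direction of proposition \ref{prop:Eqh/Regular-solutions} delivers (i).

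Finally, for (ii) $\Rightarrow$ (iii), given an abstract $\field$-algebra isomorphism $\phi:\hzero\Uh\to\U$, the composition $\pi\circ\phi^{-1}:\U\to\U$ is a surjective ring endomorphism. Since $\U=U(\slt)$ is left Noetherian (its PBW-associated graded is $S(\slt)$), it is Hopfian as a ring: for any surjective ring endomorphism $\phi$ the ascending chain $\ker\phi^n$ stabilises, and if $x\in\ker\phi$ then writing $x=\phi^n(y)$ by surjectivity forces $y\in\ker\phi^{n+1}=\ker\phi^n$, whence $x=\phi^n(y)=0$. So $\pi\circ\phi^{-1}$, and hence $\pi$, is an isomorphism. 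The principal obstacle is step (i) $\Rightarrow$ (iii), which rests on the full analytic strength of proposition \ref{prop:Eqh/Regular-solutions} producing a regular summable $\xi$; once the explicit PBW-expansion of $X^+X^-$ is in hand, the remaining implications are essentially formal.
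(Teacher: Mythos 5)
Your proposal is correct, and for two of the three nontrivial implications it follows the paper's own route: (i)~$\Rightarrow$~(iii) is exactly the paper's argument (regular solution of $\psi \ltimes \xi = \psi[+1]$ from proposition \ref{prop:Eqh/Regular-solutions}, assembled into the relation $X^+X^- = \sum_a (X^-)^a(X^+)^a\xi^a(H)$ via proposition \ref{prop:Eqh/Interpretation}; the paper concludes by noting $f$ sends a spanning set to a basis, while you reduce mod $h$ and compose, a cosmetic difference), and (iii)~$\Rightarrow$~(i) is the same extraction of a PBW expansion of $X^+X^-$ fed back into propositions \ref{prop:Eqh/Interpretation} and \ref{prop:Eqh/Regular-solutions} -- the paper works inside the centralizer $\Uh[\psi,0]$ of $H$, whereas you invoke the $\ZZ$-grading of $\Uh$; your unproved claim that $I_h(\psi)$ is graded is true (the homogeneous components of an element act on distinct weight lines of $\Vermah$, hence vanish independently), but it is precisely the point the paper sidesteps by restricting to degree zero. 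The genuine divergence is (ii)~$\Rightarrow$~(iii): the paper pulls back the finite-dimensional irreducibles $\Simple$ along the surjective endomorphism $g = f \circ f'$ of $\U$, observes they stay irreducible of the same dimension, and applies proposition \ref{prop:Preliminaries/Uh/perfect} to get injectivity; you instead invoke the Hopfian property of the left Noetherian ring $\U$ via stabilization of the chain $\ker\phi^n$. Your argument is valid (the kernels are two-sided, hence left, ideals, and $\U$ is left Noetherian by PBW) and more general -- it would apply verbatim to any Noetherian enveloping algebra -- at the cost of importing Noetherianity, while the paper's version stays entirely within the representation-theoretic toolkit it has already built.
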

\begin{proof*}[of theorem \ref{thm:CKM/Formal-deformations}]\par
Let $f$ be the surjective $\alie$-algebra homomorphism from $\hzero \Uh$ to $\U$ (lemma \ref{lem:Preliminaries/Uh/projection-map}). We already proved that $\Uh$ is topologically free (lemma \ref{lem:Preliminaries/Uh/topologically-free}), it is therefore sufficient to prove that $f$ is injective in order to prove that $\Uh$ is a formal deformation of the $\alie$-algebra $\U$. We propose to prove the following implications: (i) $\Rightarrow$ (iii), (ii) $\Rightarrow$ (iii) and (iii) $\Rightarrow$ (i). The implication (iii) $\Rightarrow$ (ii) is immediate.

\subsubsection*{1.} Assertion (i) implies assertion (iii).
\begin{proof}
Let us suppose that $\psi$ is regular. Then, the equation $\psi \ltimes \xi = \psi[+1]$ admits a regular solution $\xi$ in $\SCoeff[0]$ (proposition \ref{prop:Eqh/Regular-solutions}). The series $\sum_{a \geq 0} (X^-)^a (X^+)^a \xi^a(H)$ converges to a unique element $x$ in $\Uh$ (for the $h$-adic topology), such that $\psi(x) = \psi[+1]$ (proposition \ref{prop:Eqh/Interpretation}). As $\psi[+1] = \psi(X^+X^-)$, this means that for every $n \in \ZZ$, the elements $X^+X^-$ and $x$ act identically on $\Vermah$, when viewed as a representation of $\Uh$. Hence, by definition of $\Uh$, the relation $X^+X^- = x$ holds in $\Uh$. It follows that $\hzero \Uh$ is spanned by the monomials $(X^-)^a (X^+)^b H^c$ ($a,b,c \geq 0$). On the other hand, these monomials form the PBW basis of $\U$. In other words, $f$ sends a spanning subset to a basis. This implies that $f$ is injective.
\end{proof}

\subsubsection*{2.} Assertion (ii) implies assertion (iii).
\begin{proof}
We suppose that $\Uh$ is a formal deformation of the $\field$-algebra $\U$. Then there is a $\field$-algebra isomorphism $f'$ from $\U$ to $\hzero \Uh$. Let $g$ designate the map $f \circ f'$, it a surjective $\field$-algebra endomorphism of $\U$. We denote by $\Simple^g$ the pullback by $g$ of the $(n+1)$-dimensional irreducible representation $\Simple$ of $\slt$ ($n \geq 0$). The pullback $\Simple^g$ is a representation of $\U$ of dimension $n+1$, irreducible again, as $g$ is surjective. The representation $\Simple^g$ is thus isomorphic to $\Simple$. Let $x$ in $\U$ such that $g(x) = 0$. The element $x$ acts by zero on the pullback $\Simple^g$ for every $n \geq 0$. As a consequence, $x$ acts by zero on $\Simple$ for every $n$, and is thus equal to zero (proposition \ref{prop:Preliminaries/Uh/perfect}). In other words $g$ is injective. It implies that $f$ is injective.
\end{proof}

\subsubsection*{3.} Assertion (iii) implies assertion (i).
\begin{proof}
Let $\Uh[\psi,0]$ designate the $\PSR$-submodule of $\Uh$ formed by the elements $x$ such that $[H,x] = 0$. We denote by $P$ the subset of $\Uh[\psi,0]$ formed by the monomials $(X^-)^a (X^+)^a H^b$ ($a,b \geq 0$). The $\PSR$-submodule $\Uh[\psi,0]$ is closed in $\Uh$ (for the $h$-adic topology). Hence, as $\Uh$ is topologically free (lemma \ref{lem:Preliminaries/Uh/topologically-free}), so then is $\Uh[\psi,0]$, and the inclusion map from $P$ to $\Uh[\psi,0]$ induces a $\PSR$-linear map $j$ from $(\field P) \hhh$ to $\Uh[\psi,0]$. We denote by $f$ the inclusion map from $\Uh[\psi,0]$ to $\Uh$. The map $\hzero f$ is injective: $[H,hx] = 0$ implies $[H,x] = 0$ for any $x \in \Uh$. Let us suppose that $\Uh$ is a formal deformation of the $\alie$-algebra $\U$. Then, there exists an $\alie$-algebra isomorphism $g_0$ from $\hzero \Uh$ to $\U$. The map $g_0 \circ \hzero f$ induces an injective map $f_0$ from $\hzero{\Uh[\psi,0]}$ to $U(0)$, where $U(0)$ designates the subspace of $\U$ formed by the elements $x$ such that $[H,x] = 0$. In view of the PBW basis of $\U$, the monomials $(X^-)^a (X^+)^a H^b$ ($a,b \geq 0$) form a basis of $U(0)$. This implies that $f_0 \circ \hzero j$ is surjective. As $f_0$ is injective, it follows that $\hzero j$ is surjective. Since $(\field P) \hhh$ and $\Uh$ are both topologically free, $j$ is surjective as well. The element $X^+ X^-$ in $\Uh[\psi,0]$ therefore belongs to the image of $j$. In other words, there exists a regular sequence $\xi = (\xi^a)_{a \geq 0}$ in $\SCoeff[0]$ such that the element $X^+ X^-$ is equal to $\sum_{a=0}^\infty (X^-)^a (X^+)^a \xi^a(H)$ in $\Uh$. This implies that $\xi$ is a solution of $\psi \ltimes \xi = \psi(X^+X^-)$ (proposition \ref{prop:Eqh/Interpretation}). As $\psi(X^+X^-) = \psi[+1]$, this proves that the equation $\psi \ltimes \xi = \psi[+1]$ admits a regular solution in $\SCoeff[0]$. Therefore, the colouring $\psi$ is regular (proposition \ref{prop:Eqh/Regular-solutions}).
\end{proof}
\end{proof*}

\subsection{Generators and relations}
We give here a Chevalley-Serre presentation for the coloured Kac-Moody algebra $\Uh$.
\begin{thm}
\label{thm:CKM/Generators-and-relations}
	Let $\psi$ be a regular colouring. The $\PSR$-algebra $\Uh$ is topologically generated by $H,X^-, X^+$ and subject to the relations
	\begin{subequations}
	\label{eq:CKM/Generators-and-relations}
		\begin{align}
			\label{eq:CKM/Generators-and-relations/non-deformed}
			[H, X^\pm] \, &= \, \pm 2 X^\pm, \\
			\label{eq:CKM/Generators-and-relations/deformed}
			X^+X^- \, &= \, \sum_{a=0}^\infty (X^-)^a (X^+)^a \xi^a(H),
		\end{align}
	\end{subequations}
	where $\xi$ designates the regular solution in $\SCoeff[0]$ of the equation $\psi \ltimes \xi = \psi[+1]$ (proposition \ref{prop:Eqh/Regular-solutions}).
\end{thm}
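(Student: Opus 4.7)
Let $A$ denote the $\PSR$-algebra topologically generated by $H, X^-, X^+$ subject to \eqref{eq:CKM/Generators-and-relations}; concretely, $A = \Uha / J$ where $J$ is the closed two-sided ideal generated by the element obtained from \eqref{eq:CKM/Generators-and-relations/deformed} (relation \eqref{eq:CKM/Generators-and-relations/non-deformed} is automatic in $\Uha$, and the series $\sum_{a \geq 0} (X^-)^a (X^+)^a \xi^a(H)$ converges in $\Uha$ by summability of $\xi$). The proof of (i)$\Rightarrow$(iii) of theorem \ref{thm:CKM/Formal-deformations} shows that \eqref{eq:CKM/Generators-and-relations/deformed} also holds in $\Uh$, yielding a canonical continuous surjective $\PSR$-algebra homomorphism $\pi : A \twoheadrightarrow \Uh$. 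The goal is to prove that $\pi$ is an isomorphism.

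The key step is to identify $\hzero A$. Modulo $h$, relation \eqref{eq:CKM/Generators-and-relations/non-deformed} is unchanged, and the right-hand side of \eqref{eq:CKM/Generators-and-relations/deformed} becomes $\sum_{a \geq 0} (X^-)^a (X^+)^a \xi^a_0(H)$. Now $\psi_0 = \Ncol$ by (C1), so $\xi_0 = (\xi^a_0)_{a \geq 0}$ is a quasi-regular (since $\xi$ is regular) solution of $\Ncol \ltimes \xi_0 = \Ncol[+1]$; but the proof of step 5 in proposition \ref{prop:Eqh/Regular-solutions} exhibits $f^0(n) = n$, $f^1(n) = 1$, $f^a = 0$ for $a \geq 2$ as a quasi-regular solution of this equation, and step 4 of the same proposition guarantees uniqueness among quasi-regular solutions. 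Hence $\xi^0_0(H) = H$, $\xi^1_0(H) = 1$, and $\xi^a_0 = 0$ for $a \geq 2$, so \eqref{eq:CKM/Generators-and-relations/deformed} reduces modulo $h$ to $X^+X^- = H + X^- X^+$, i.e.\ $[X^+, X^-] = H$. Thus $\hzero A$ is a quotient of $\U$, and the composition $\U \twoheadrightarrow \hzero A \xrightarrow{\hzero \pi} \hzero \Uh \cong \U$ (last isomorphism by theorem \ref{thm:CKM/Formal-deformations}) fixes $H, X^-, X^+$ and is therefore the identity; both surjections are thus isomorphisms.

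Finally I pass from $\hzero \pi$ being an isomorphism to $\pi$ itself being an isomorphism by a standard $h$-adic descent. The algebra $A$ is Hausdorff and complete as the quotient of the Hausdorff complete algebra $\Uha$ by a closed two-sided ideal, while $\Uh$ is torsion-free since it is topologically free (lemma \ref{lem:Preliminaries/Uh/topologically-free}). If $x \in \ker \pi$, then $\hzero \pi$ injective gives $x \in hA$, say $x = h x_1$; applying $\pi$ yields $h \pi(x_1) = 0$, forcing $\pi(x_1) = 0$ by torsion-freeness, and hence $x_1 \in \ker \pi \subseteq hA$ again. Iterating, $x \in \bigcap_m h^m A = \{0\}$ by Hausdorffness. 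The main obstacle in the argument is the mod-$h$ identification of the deformed relation, which requires combining the explicit formula for the solution at the natural colouring with the uniqueness clause of proposition \ref{prop:Eqh/Regular-solutions}; once this is in hand, the rest of the proof is a formal descent.
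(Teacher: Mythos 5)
Your proposal is correct, and its skeleton matches the paper's: present the algebra by the two relations, use proposition \ref{prop:Eqh/Interpretation} (via the argument in theorem \ref{thm:CKM/Formal-deformations}) to see that the deformed relation holds in $\Uh$ and hence obtain a surjection $\pi$ onto $\Uh$, then reduce modulo $h$ and conclude by $h$-adic completeness/torsion-freeness. Where you genuinely diverge is in proving that $\hzero\pi$ is bijective. The paper never identifies $\xi_0$: it only observes that the two relations let one straighten any word into the monomials $(X^-)^a(X^+)^bH^c$, so that $\hzero U$ is spanned by these monomials, which $\hzero\pi$ sends to the PBW basis of $\hzero\Uh\cong\U$; a map carrying a spanning set onto a basis is bijective. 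You instead pin down the classical limit of the relation exactly, by noting that $\xi_0$ is a quasi-regular solution of $\Ncol\ltimes\xi_0=\Ncol[+1]$ and invoking the explicit solution and the uniqueness clause from steps 4--5 of proposition \ref{prop:Eqh/Regular-solutions} to get $\xi^0_0(H)=H$, $\xi^1_0(H)=1$, $\xi^a_0=0$ for $a\geq 2$; this shows $\hzero A$ is a quotient of $\U$ and the composite $\U\twoheadrightarrow\hzero A\to\hzero\Uh\cong\U$ fixes the generators, hence is the identity. Your route costs an extra appeal to the internals of proposition \ref{prop:Eqh/Regular-solutions} but buys the explicit statement that relation \eqref{eq:CKM/Generators-and-relations/deformed} specializes at $h=0$ to $[X^+,X^-]=H$, which the paper's spanning argument deliberately sidesteps; both ultimately lean on theorem \ref{thm:CKM/Formal-deformations} to identify $\hzero\Uh$ with $\U$, and your concluding descent is exactly the standard fact (viii) the paper cites in compressed form.
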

\begin{proof}
The element $\xi^a(H)$ tends to zero in $\field[H] \hhh$ (for the $h$-adic topology), as $a$ goes to infinity. This implies that the the right-hand side of \eqref{eq:CKM/Generators-and-relations/deformed} converges to a unique element in the $\PSR$-algebra $\field \langle H,X^-,X^+ \rangle \hhh$. Let then $U$ be the quotient of $\field \langle H,X^-,X^+ \rangle \hhh$ by the smallest closed (for the $h$-adic topology) two-sided ideal containing relations \eqref{eq:CKM/Generators-and-relations}. The sequence $\xi$ being a regular solution in $\SCoeff[0]$ of the equation $\psi \ltimes \xi = \psi[+1]$, the series $\sum_{a \geq 0} (X^-)^a (X^+)^a \xi^a(H)$ converges to a unique element $x$ in $\Uh$ (for the $h$-adic topology), such that $\psi(x) = \psi[+1]$ (proposition \ref{prop:Eqh/Interpretation}). As $\psi[+1] = \psi(X^+X^-)$, this means that for every $n \in \ZZ$, the elements $X^+X^-$ and $x$ act identically on $\Vermah$, when viewed as a representation of $\Uh$. Hence, by definition of $\Uh$, relation \eqref{eq:CKM/Generators-and-relations/deformed} holds in $\Uh$. It follows, as $\Uh$ is topologically free (lemma \ref{lem:Preliminaries/Uh/topologically-free}), that there is a canonical $\field$-algebra homomorphism $f$ from $U$ to $\Uh$. On the other hand, relations \eqref{eq:CKM/Generators-and-relations} imply that $\hzero U$ is spanned by the monomials $(X^-)^a (X^+)^b H^c$ ($a,b,c \geq 0$). These monomials form the PBW basis of $\U$, and of $\hzero \Uh$ as well, since $\U$ and $\hzero \Uh$ are isomorphic as $\alie$-algebras (theorem \ref{thm:CKM/Formal-deformations}). In other words, the map $\hzero f$ sends a spanning subset to a basis. The map $\hzero f$ is therefore bijective. Since $U$ is Hausdorff and complete (for the $h$-adic topology), and since $\Uh$ is topologically free (lemma \ref{lem:Preliminaries/Uh/topologically-free}), it follows that $f$ is bijective.
\end{proof}

\subsection{Classical and quantum realizations}
We prove here that the constant formal deformation $\U \hhh$ and the quantum algebra $\Uq$ can both be realized as coloured Kac-Moody algebras.\par
We recall that the quantum algebra $\Uq$ is the $\Uha$-algebra topologically generated by $H,X^-, X^+$ and subject to the relation
\begin{equation}
\label{eq:CKM/Realizations/Drinfel'd-Jimbo}
	[X^+, X^-] \, = \, \frac{q^H  - q^{-H}}{q - q^{-1}} \quad \text{with $q = \exp(h)$ and $q^H = \exp(hH)$,}
\end{equation}
i.e.\ $\Uq$ is the quotient of $\Uha$ by the smallest closed (for the $h$-adic topology) two-sided ideal containing relation \eqref{eq:CKM/Realizations/Drinfel'd-Jimbo}.
\begin{thm}
\label{thm:CKM/Realizations}
	As $\Uha$-algebras, $\U \hhh$ and $\Uq$ are isomorphic to the coloured Kac-Moody algebras $\Uh[\Ncol]$ and $\Uh[\qcol]$, respectively.
\end{thm}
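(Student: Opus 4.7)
The plan is to apply the universal property of $\Uh$ (Proposition \ref{prop:Preliminaries/Uh/universal}) to $\U\hhh$ and $\Uq$, producing surjective $\Uha$-algebra homomorphisms $g_1 \colon \U\hhh \to \Uh[\Ncol]$ and $g_2 \colon \Uq \to \Uh[\qcol]$, and then to promote both to isomorphisms by reducing modulo $h$ and invoking the rigidity of $\alie$-algebra endomorphisms of $\U$.

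For the classical side, the natural colouring is trivially regular, since $\Ncol^k(n) = k(n-k+1) \in \field[k,n]$, so Theorem \ref{thm:CKM/Formal-deformations} ensures that $\Uh[\Ncol]$ is a formal deformation of the $\alie$-algebra $\U$. The canonical projection $\alie \to \slt$ extends to a surjective structural homomorphism $\Uha \to \U\hhh$, and the $\Uha$-action on $\Vermah[n,\Ncol] = \Verma\hhh$ factorises through $\U\hhh$ by construction. The hypotheses of Proposition \ref{prop:Preliminaries/Uh/universal} are therefore met and yield $g_1$.

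For the quantum side, I would first check that $\qcol$ is regular: this reduces to showing that $[k]_q = \sinh(hk)/\sinh(h)$ lies in $\field[k]\hhh$, which follows from a short power-series expansion in $h$, and then $\qcol^k(n) = [k]_q [n-k+1]_q \in \field[k,n]\hhh$. Thus $\Uh[\qcol]$ is also a formal deformation of the $\alie$-algebra $\U$. The structural map $\Uha \to \Uq$ is surjective by definition of $\Uq$ as a quotient, and Example \ref{ex:Preliminaries/Colourings/q-colouring} identifies $\Vermah[n,\qcol]$ with the Verma module of $\Uq$, so its $\Uha$-action factorises through $\Uq$. Proposition \ref{prop:Preliminaries/Uh/universal} then delivers $g_2$.

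To finish in both cases I would reduce modulo $h$. Each $\hzero{g_i}$ is an $\alie$-algebra homomorphism between two $\alie$-algebra deformations of $\U$; using Lemma \ref{lem:Preliminaries/Uh/projection-map} together with Theorem \ref{thm:CKM/Formal-deformations} on the target side, and the quoted fact that $\Uq$ is a deformation of the $\alie$-algebra $\U$ on the source side of $g_2$, both source and target can be canonically identified with $\U$. Any $\alie$-algebra endomorphism of $\U$ must fix the image of the surjective structural map $\Ua \to \U$, hence must equal the identity. Consequently $\hzero{g_i} = \mathrm{id}$, in particular injective; since the source is Hausdorff and the target is topologically free (hence torsion-free), fact (viii) lifts this to injectivity of $g_i$, and combined with surjectivity this gives the required isomorphism. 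The only substantive verification in the programme is the regularity of $\qcol$, which is where the explicit Drinfel'd--Jimbo relation has to make contact with the abstract colouring framework; everything else is a clean consequence of the universal property and rigidity modulo $h$.
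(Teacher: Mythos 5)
Your proposal is correct, and its first half (invoking Proposition \ref{prop:Preliminaries/Uh/universal} to produce surjective $\Uha$-algebra homomorphisms $\U\hhh \to \Uh[\Ncol]$ and $\Uq \to \Uh[\qcol]$) is exactly what the paper does. Where you diverge is in how injectivity modulo $h$ is obtained. You identify the \emph{target} $\hzero{\Uh[\Ncol]}$ (resp.\ $\hzero{\Uh[\qcol]}$) with $\U$ by appealing to Theorem \ref{thm:CKM/Formal-deformations}, which forces you to verify that $\Ncol$ and $\qcol$ are regular --- an extra (true, and not difficult) computation with $[k]_q = \sinh(hk)/\sinh(h)$. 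The paper avoids this entirely: it only uses the surjection $g_0 \colon \hzero{\Uh[\psi]} \to \U$ supplied by Lemma \ref{lem:Preliminaries/Uh/projection-map}, composes it with $\hzero f$ to get an $\alie$-algebra endomorphism of the \emph{source} ($\hzero{\U\hhh}$, resp.\ $\hzero{\Uq}$), and concludes that this composite is the identity by the same rigidity argument you use; injectivity of $\hzero f$ follows without ever knowing that $\hzero{\Uh[\psi]} \cong \U$. The paper's route is thus more economical (it needs only Section 2 machinery, not Theorem \ref{thm:CKM/Formal-deformations} or the key Proposition \ref{prop:Eqh/Regular-solutions} behind it), and it has the pleasant side effect that regularity of $\qcol$ can then be \emph{deduced} from Theorem \ref{thm:CKM/Realizations} together with Theorem \ref{thm:CKM/Formal-deformations}, rather than checked by hand. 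Your version buys nothing extra here, but it is logically sound: Theorem \ref{thm:CKM/Formal-deformations} is proved independently of this statement, so there is no circularity, and your regularity check for $\qcol$ is a legitimate (if avoidable) piece of work.
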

\begin{proof}
Let us recall that $\Ncol$ and $\qcol$ are the colourings defined by $\Ncol^k(n) = k(n-k+1)$ and $\qcol^k(n) = [k]_q [n-k+1]_q$ (for $k \geq 1$ and for $n \in \ZZ$).\par
The relation $[X^+,X^-] = H$ holds in the representation $\Vermah[n,\Ncol]$ for all $n \in \ZZ$. It follows, in view of the Chevalley-Serre presentation of $\U$, and as $\Vermah[n,\Ncol]$ is Hausdorff (for the $h$-adic topology), that the action of $\Uha$ on $\Vermah[n,\Ncol]$ factorises through $\U \hhh$ for all $n \in \ZZ$. This implies, by the universal property of $\Uh[\Ncol]$ (proposition \ref{prop:Preliminaries/Uh/universal}), that there exists a surjective $\Uha$-algebra homomorphism $f$ from $\U \hhh$ to $\Uh[\Ncol]$. On the other hand, the $\alie$-algebra $\hzero{\U \hhh}$ is isomorphic to $\U$. Hence, there is an $\alie$-algebra homomorphism $g_0$ from $\hzero {\Uh[\Ncol]}$ to $\hzero{\U \hhh}$ (lemma \ref{lem:Preliminaries/Uh/projection-map}). Let us then consider the map $g_0 \circ \hzero f$. It is an $\alie$-algebra endomorphism of $\hzero{\U \hhh}$. Therefore, $g_0 \circ \hzero f$ is equal to the identity map. This implies that $\hzero f$ is injective. Since $\Uh[\Ncol]$ is torsion-free (lemma \ref{lem:Preliminaries/Uh/topologically-free}), and since $\U \hhh$ is Hausdorff (for the $h$-adic topology), it follows that $f$ is injective, and thus bijective.\par
The proof for $\Uq$ is similar. Namely, relation \eqref{eq:CKM/Realizations/Drinfel'd-Jimbo} holds in the representation $\Vermah[n,\qcol]$ for all $n \in \ZZ$. It follows, as $\Vermah[n,\qcol]$ is Hausdorff (for the $h$-adic topology), that the action of $\Uha$ on $\Vermah[n,\qcol]$ factorises through $\Uq$ for all $n \in \ZZ$. This implies, by the universal property of $\Uh[\qcol]$ (proposition \ref{prop:Preliminaries/Uh/universal}), that there exists a surjective $\Uha$-algebra homomorphism $f$ from $\Uq$ to $\Uh[\qcol]$. On the other hand, the $\alie$-algebra $\hzero \Uq$ is isomorphic to $\U$ (the functor $\hzero{(\bullet)}$ from the category of $\PSR$-modules to the category of $\field$-vector spaces is a right-exact functor, and relation \eqref{eq:CKM/Realizations/Drinfel'd-Jimbo} is $[X^+,X^-] = H$ modulo $h$). Hence, there is an $\alie$-algebra homomorphism $g_0$ from $\hzero{\Uh[\qcol]}$ to $\hzero \Uq$ (lemma \ref{lem:Preliminaries/Uh/projection-map}). Let us then consider the map $g_0 \circ \hzero f$. It is an $\alie$-algebra endomorphism of $\hzero \Uq$. Therefore, $g_0 \circ \hzero f$ is equal to the identity map. This implies that $\hzero f$ is injective. Since $\Uh[\qcol]$ is torsion-free (lemma \ref{lem:Preliminaries/Uh/topologically-free}), and since $\Uq$ is Hausdorff (for the $h$-adic topology), it follows that $f$ is injective, and thus bijective.
\end{proof}

\subsection{$\blie$-triviality}
Let $\psi$ be a regular colouring. We know that the coloured Kac-Moody algebra $\Uh$ is a formal deformation of the $\alie$-algebra $\U$ (theorem \ref{thm:CKM/Formal-deformations}). We say that the formal deformation $\Uh$ is \emph{$\blie$-trivial} if there is a $\PSR$-algebra isomorphism $g$ from $\U \hhh$ to $\Uh$, such that $g(H) = H$, $g(X^-) = X^-$, and such that $\hzero g$ is an $\alie$-algebra isomorphism. The isomorphism $g$ is called a \emph{$\blie$-trivialization} of $\Uh$.
\begin{thm}
\label{thm:CKM/b-triviality}
	Let $\psi$ be a regular colouring. The coloured Kac-Moody algebra $\Uh$ is $\blie$-trivial, and it admits a unique $\blie$-trivialization.
\end{thm}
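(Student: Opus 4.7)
The plan is to construct a unique element $Y \in \Uh$ of weight $+2$ satisfying $[Y, X^-] = H$ in $\Uh$ and $\hzero Y = X^+$ under the $\alie$-algebra isomorphism $\hzero \Uh \cong \U$ of Theorem \ref{thm:CKM/Formal-deformations}. Once $Y$ is found, the assignments $g(H) = H$, $g(X^-) = X^-$, $g(X^+) = Y$ send each relation of $\U$ to a relation holding in $\Uh$, hence extend to a $\PSR$-algebra homomorphism $g \colon \U\hhh \to \Uh$; its reduction modulo $h$ is the identity of $\U$, and since both sides are topologically free this forces $g$ to be an isomorphism, and thus a $\blie$-trivialization.

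For existence, I would look for $Y$ in the form $Y = \sum_{a \geq 1} (X^-)^{a-1}(X^+)^a \xi^a(H)$ with $\xi$ a regular sequence in $\SCoeff[1]$. By Proposition \ref{prop:Eqh/Interpretation}, this series converges in $\Uh$ to a degree-$1$ element whose Verma sequence is $\psi \ltimes \xi$. Writing out $[Y, X^-] = H$ on each basis vector $b_k$ of $\Vermah$ yields the recurrence $\theta^1(n) = n$ and $\theta^{k+1}(n) - \theta^k(n) = n - 2k$ for $k \geq 1$ on $\theta := \psi(Y)$, whose unique solution is $\theta^k(n) = k(n-k+1) = \Ncol^k(n)$. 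The natural colouring $\Ncol$ is manifestly regular, and axioms (C2) and (C3) (which it enjoys as a colouring) say exactly that it is of Verma type in $\VCoeff[1]$. Proposition \ref{prop:Eqh/Regular-solutions}(2) applied with $d = 1$ --- where the regularity of $\psi$ is essential --- produces a unique regular $\xi \in \SCoeff[1]$ with $\psi \ltimes \xi = \Ncol$, and Proposition \ref{prop:Eqh/Interpretation} assembles $Y$ from it. Weight $+2$ is immediate from the ansatz, the identity $[Y, X^-] = H$ transfers from identical actions on every $\Vermah$ to $\Uh$ itself, and $\hzero Y = X^+$ because $\psi(Y) = \Ncol$ matches $\psi(X^+) = \psi$ modulo $h$ by axiom (C1), while the canonical identification $\hzero \Uh \cong \U$ respects the Verma actions.

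For uniqueness, given two $\blie$-trivializations $g_1, g_2$, the difference $Z := g_2(X^+) - g_1(X^+)$ is a degree-$1$ element of $\Uh$ with $[Z, X^-] = 0$ and $\hzero Z = 0$. Writing $Z \cdot b_k = z^k(n) b_{k-1}$, the relation $[Z, X^-] = 0$ applied to $b_0$ forces $z^1(n) = 0$, and applied to $b_k$ for $k \geq 1$ yields $z^{k+1}(n) = z^k(n)$; hence $z^k \equiv 0$ for every $k \geq 1$, so $Z$ acts by zero on every $\Vermah$ and thus vanishes in $\Uh$. The main obstacle is the existence step --- identifying $\theta = \Ncol$ as the target for $\psi \ltimes (\cdot)$ and verifying its regularity together with the Verma-type conditions --- so that the non-trivial direction of Proposition \ref{prop:Eqh/Regular-solutions} applies and delivers the summable regular preimage $\xi$ from which $Y$ is assembled.
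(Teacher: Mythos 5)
Your proof is correct, and it inverts the direction of the paper's construction. The paper builds the trivialization by working inside $\Uh[\Ncol] \cong \U \hhh$ (theorem \ref{thm:CKM/Realizations}): it solves $\Ncol \ltimes \xi = \psi$ to produce an element $x \in \Uh[\Ncol]$ with $\Ncol(x) = \psi$, makes $\Uh[\Ncol]$ a $\Uha$-algebra via $H, X^-, x$, and then invokes the universal property of $\Uh$ (proposition \ref{prop:Preliminaries/Uh/universal}) to obtain a surjective map onto $\Uh$, extracting bijectivity afterwards. You instead work inside $\Uh$ itself, solving the transposed equation $\psi \ltimes \xi = \Ncol$ to produce an element $Y$ with $\psi(Y) = \Ncol$ --- a copy of the classical raising operator --- and you define $g$ directly on generators. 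Both routes hinge on the same two ingredients, namely proposition \ref{prop:Eqh/Interpretation} and part 2 of proposition \ref{prop:Eqh/Regular-solutions} with $d = 1$ (your verification that $\Ncol$ is a regular sequence of Verma type is exactly what is needed there), together with the fact that identities of operators on all the $\Vermah$ are identities in $\Uh$. What your version buys is independence from theorem \ref{thm:CKM/Realizations} and from the universal property; the price is that you must check by hand that $(H, X^-, Y)$ satisfy the Chevalley-Serre relations of $\U$ (your recurrence does this) and that $\hzero Y = \hzero{X^+}$ --- the latter is most cleanly justified by observing that $\xi \bmod h$ is the unique quasi-regular solution of $\Ncol \ltimes \xi_0 = \Ncol$, namely $\xi_0^1 = 1$ and $\xi_0^a = 0$ otherwise, exactly as in the paper's step 1; your appeal to theorem \ref{thm:CKM/Formal-deformations} combined with proposition \ref{prop:Preliminaries/Uh/perfect} also works. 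Your uniqueness argument is the paper's step 2 transported from $\U \hhh$ to $\Uh$: the same weight-space computation forces $z^k = 0$ for all $k$, with the conclusion $Z = 0$ following from the definition of $\Uh$ rather than from proposition \ref{prop:Preliminaries/Uh/perfect}. No gaps.
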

\begin{proof}[of theorem \ref{thm:CKM/b-triviality}] The proof of the theorem is in two steps.

\subsubsection*{\sc Step 1.} There exists a surjective $\PSR$-algebra homomorphism $g$ from $\U \hhh$ to $\Uh$ such that $g(H) = H$ and $g(X^-) = X^-$.
\begin{proof}
Let us consider the natural colouring $\Ncol$, defined by $N^k(n) = k(n-k+1)$ for $k \geq 1$ and for $n \in \ZZ$. As $\Ncol$ and $\psi$ are regular, the equation $\Ncol \ltimes \xi = \psi$ admits a regular solution $\xi$ in $\SCoeff[1]$ (proposition \ref{prop:Eqh/Regular-solutions}). The series $\sum_{a \geq 1} (X^-)^{a-1} (X^+)^a \xi^a(H)$ converges to a unique element $x$ in $\Uh[\Ncol]$ (for the $h$-adic topology), such that $\Ncol(x) = \psi$ (proposition \ref{prop:Eqh/Interpretation}). We denote by $f$ be the $\PSR$-algebra homomorphism from $\Uha$ to $\Uh[\Ncol]$ defined by $f(H) = H$, $f(X^-) = X^-$, $f(X^+) = x$. In view of the first colouring axiom, $\xi_0 = (\xi^k_0)_{k \geq 1}$ is a solution of the equation $\Ncol \ltimes \xi_0 = \Ncol$. Therefore, $\xi_0^1 = 1$, and $\xi_0^k = 0$ for all $k \geq 2$, since the equation admits a unique regular solution (proposition \ref{prop:Eqh/Regular-solutions}). This proves that the images of $x$ and $X^+$ in $\hzero{\Uh[\Ncol]}$ are equal, and thus that $\hzero f$ is surjective. Since $\Uha$ is by definition topologically free, and since $\Uh[\Ncol]$ is as well (lemma \ref{lem:Preliminaries/Uh/topologically-free}), $f$ is also surjective. It follows from $\Ncol(x) = \psi$ that for all $n \in \ZZ$, the pullback by $f$ of the representation $\Vermah[n,\Ncol]$, when viewed as a representation of $\Uh[\Ncol]$, is equal to the representation $\Vermah$. In other words, the action of $\Uha$ on $\Vermah$ factorises through $f$ for all $n \in \ZZ$. It then follows from the universal property of $\Uh$ (proposition \ref{prop:Preliminaries/Uh/universal}) that there exists a surjective $\Uha$-algebra homomorphism $g$ from $\Uh[\Ncol]$ to $\Uh$, where $\Uh[\Ncol]$ is endowed with the $\Uha$-algebra structure defined by $f$. In particular, $g$ satisfies $g(H) = H$ and $g(X^-) = X^-$. As $\U \hhh$ and $\Uh[\Ncol]$ are isomorphic as $\Uha$-algebras (theorem \ref{thm:CKM/Realizations}), this concludes the proof of step 1.
\end{proof}

\subsubsection*{\sc Step 2.} The identity map is the unique $\PSR$-algebra endomorphism of $\U \hhh$ which fixes both $H$ and $X^-$.
\begin{proof}
Let $g$ be a $\PSR$-algebra endomorphism of $\U \hhh$ which fixes both $H$ and $X^-$. Let then $x^+$ be the image of $X^+$ by $g$. The relations $[H,X^+] = 2X^+$ and $[X^+,X^-] = H$ hold in $\U \hhh$, hence so do the relations $[H,x^+] = 2x^+$ and $[x^+,X^-] = H$. Let $n \in \ZZ$, and let us consider the representation $\Verma \hhh$ of $\U \hhh$. The relation $[H,x^+] = 2x^+$ implies $H.(x^+.b_0) = (n+2)(x^+.b_0)$, and thus $x^+.b_0 = 0$. It then follows, by induction $k$, and using the relation $[x^+,X^-] = H$, that $x^+.b_k = k(n-k+1)b_{k-1}$ for all $k \geq 1$. Therefore, $x^+ - X^+$ acts by zero on $\Verma \hhh$ for all $n \in \ZZ$. It follows from proposition \ref{prop:Preliminaries/Uh/perfect} that $x^+ - X^+$ is zero.
\end{proof}

\subsubsection*{Conclusion.} The unicity of a $\blie$-trivialization for $\Uh$ follows from steps 1 and 2. It remains to prove that $\Uh$ is $\blie$-trivial. Let $g$ be a $\PSR$-algebra homomorphism from $\U \hhh$ to $\Uh$, such that $g(H) = H$ and $g(X^-) = X^-$ (step 1). The map $\hzero g$ satisfies in particular $\hzero g(H) = H$ and $\hzero g(X^-) = X^-$. We denote by $\tilde g$ the $\PSR$-algebra homomorphism induced by $\hzero g$ from $(\hzero{\U \hhh}) \hhh$ to $(\hzero{\Uh}) \hhh$. The $\Uha$-algebra $(\hzero{\U \hhh}) \hhh$ is canonically isomorphic to $\U \hhh$. The $\Uha$-algebra $(\hzero{\Uh}) \hhh$ is also isomorphic to $\U \hhh$, since $\Uh$ is a formal deformation of the $\alie$-algebra $\U$ (theorem \ref{thm:CKM/Formal-deformations}). Step 2 therefore implies that $\tilde g$ is a $\Uha$-algebra isomorphism. This proves that $\hzero g$ is an $\alie$-algebra isomorphism. In particular, $\hzero g$ is bijective. Since $\Uh$ is topologically free (lemma \ref{lem:Preliminaries/Uh/topologically-free}), it follows that $g$ is bijective as well.
\end{proof}

\subsection{Classification}
Let $A$ be a formal deformation of the $\alie$-algebra $\U$. We designate again by $H,X^-,X^+$ the images of $H,X^-,X^+$, by the structural homomorphism from $\Uha$ to $A$. We say that the formal deformation $A$ is \emph{$\hlie$-trivial} if there is a $\PSR$-algebra isomorphism $g$ from $\U \hhh$ to $A$, such that $g(H) = H$, and such that $\hzero g$ is an $\alie$-algebra isomorphism. The isomorphism $g$ is called a \emph{$\hlie$-trivialization} of $A$.\par
It follows from theorem \ref{thm:CKM/b-triviality} that coloured Kac-Moody algebras are in particular $\hlie$-trivial formal deformations of $\U$. We establish in the following theorem that up to $\alie$-algebra isomorphism, there is no other $\hlie$-trivial formal deformations of $\U$, and that regular colourings classify such deformations.
\begin{thm}
\label{thm:CKM/Classification}
	For every $\hlie$-trivial formal deformation $A$ of the $\alie$-algebra $\U$, there is a unique regular colouring $\psi$ such that $A$ and $\Uh$ are isomorphic as $\Uha$-algebras.
\end{thm}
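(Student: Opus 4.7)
The plan is to associate a regular colouring $\psi$ to $A$ via an $\hlie$-trivialization $g : \U\hhh \to A$, to show that $A \cong \Uh$ as $\Uha$-algebras by the universal property of $\Uh$ (proposition \ref{prop:Preliminaries/Uh/universal}), and finally to deduce uniqueness from the Chevalley-Serre presentation of theorem \ref{thm:CKM/Generators-and-relations} combined with the PBW basis of $\U \cong \hzero{\Uh}$.

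To construct $\psi$, I pull back $\Verma\hhh$ through $g$ to an $A$-module $V_n$ with underlying $\PSR$-module $\Verma\hhh$. Since $g(H)=H$, $H$ acts diagonally with eigenvalues $n-2k$ on the standard basis $(b_k)$. Writing $g^{-1}(X^\pm)=X^\pm+hY^\pm$ in $\U\hhh$ with $Y^\pm$ of weight $\pm 2$ and expanding $Y^\pm$ in the PBW basis of $\U$, I compute
\[ X^-\cdot b_k=(1+h\alpha_k)b_{k+1},\qquad X^+\cdot b_k=\beta_k b_{k-1}, \]
where $\alpha_k,\beta_k\in\field[k,n]\hhh$ (the polynomial dependence on $k$ and $n$ comes from the products $\prod_j(k-j)(n-k+j+1)$ generated by $(X^+)^a\cdot b_k$). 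I then rescale $b_k\mapsto b'_k:=\lambda_k b_k$ with $\lambda_0=1$ and $\lambda_{k+1}/\lambda_k=1+h\alpha_k$, which normalizes $X^-$ to the standard shift, and set
\[ \psi^k(n):=\frac{\lambda_k}{\lambda_{k-1}}\beta_k=(1+h\alpha_{k-1})\beta_k. \]
This closed expression shows $\psi^k(n)\in\field[k,n]\hhh$, hence $\psi$ is regular. Axiom (C1) follows from $\alpha_k\equiv 0$ and $\beta_k\equiv k(n-k+1)\pmod h$. Axioms (C2) and (C3) are automatic from the fact that the inclusion $\Verma[-n-2]\hhh\subset\Verma\hhh$ of $\U\hhh$-modules remains an inclusion of $A$-modules after pullback by $g$: this forces $\beta_{n+1}(n)=0$ (as $Y^+\cdot b_{n+1}^{(n)}$ would lie in a weight of the submodule that is empty), whence (C2), and it forces the identities $\alpha_j(-n-2)=\alpha_{n+1+j}(n)$ and $\beta_j(-n-2)=\beta_{n+1+j}(n)$ (via $\U$-linearity of $Y^\pm$ through the iso $b_j^{(-n-2)}\mapsto b_{n+1+j}^{(n)}$), whence (C3).

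For the isomorphism $A\cong\Uh$, note that $V_n$ in the basis $(b'_k)$ coincides with $\Vermah$ as a representation of $\Uha$ via the structural map $\Uha\to A$. Hence the action of $\Uha$ on each $\Vermah$ factors through $A$, and proposition \ref{prop:Preliminaries/Uh/universal} yields a surjective $\Uha$-algebra homomorphism $\pi:A\to\Uh$. Modulo $h$, $\hzero\pi$ is a surjective $\alie$-algebra endomorphism of $\U$, hence injective by the argument in the implication (ii) $\Rightarrow$ (iii) of theorem \ref{thm:CKM/Formal-deformations}. Since $A$ and $\Uh$ are topologically free (lemma \ref{lem:Preliminaries/Uh/topologically-free}), $\pi$ itself is an isomorphism.

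For uniqueness, suppose $\Uh\cong\Uh[\psi']$ as $\Uha$-algebras, with $\psi,\psi'$ regular. Theorem \ref{thm:CKM/Generators-and-relations} gives two expansions of $X^+X^-$ in this common algebra, and comparing coefficients in the PBW basis $(X^-)^a(X^+)^aH^c$ (available by theorem \ref{thm:CKM/Formal-deformations}) yields $\xi_\psi=\xi_{\psi'}=:\xi$. The equations $\psi\ltimes\xi=\psi[+1]$ and $\psi'\ltimes\xi=\psi'[+1]$ then share the same $\xi$; the $k=0$ instance gives $\psi^1=\xi^0=\psi'^1$, and the $k$-th instance determines $\psi^{k+1}(n)$ inductively from $\xi$ and $\psi^1,\dotsc,\psi^k$, so $\psi=\psi'$ by induction. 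The main obstacle in the plan is regularity of $\psi$: a naive evaluation of the product $\lambda_k$ destroys polynomial dependence on $k$, and the key observation is that $\psi^k(n)$ depends only on the single ratio $\lambda_k/\lambda_{k-1}=1+h\alpha_{k-1}$, which is manifestly polynomial in $k$ and $n$.
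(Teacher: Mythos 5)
Your proposal is correct in substance and its overall architecture (extract a colouring from an $\hlie$-trivialization by pulling back the Verma modules, then invoke the universal property of proposition \ref{prop:Preliminaries/Uh/universal} to get $A \cong \Uh$) coincides with Steps 2--3 of the paper's proof. You diverge from the paper in three places, all legitimately. First, you prove regularity of $\psi$ by direct computation, observing that $\psi^k(n)=(1+h\alpha_{k-1})\beta_k$ depends only on the single ratio $\lambda_k/\lambda_{k-1}$ and that $\alpha_k,\beta_k\in\field[k,n]\hhh$ because each $h$-coefficient of $g^{-1}(X^\pm)-X^\pm$ is a fixed finite PBW sum acting polynomially in $(k,n)$; the paper instead gets regularity for free a posteriori, from $\Uh\cong A$ being a formal deformation together with theorem \ref{thm:CKM/Formal-deformations}. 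Your route is more self-contained, the paper's is shorter given the machinery already in place. Second, you verify (C3) via the explicit $\U\hhh$-module isomorphism $\Verma[-n-2]\hhh\cong\bigoplus_{k\geq n+1}\field b_k\hhh$, whereas the paper deduces it from its Step 1 (at most one sequence $\varphi$ makes $\Vermah[n,\varphi]$ factor through $A$). Third, and most significantly, your uniqueness argument is genuinely different: you compare the two expansions of $X^+X^-$ from theorem \ref{thm:CKM/Generators-and-relations} inside the common algebra, deduce $\xi_\psi=\xi_{\psi'}$ from uniqueness of PBW-type expansions in $\Uh$, and recover $\psi=\psi'$ by the triangular structure of $\psi\ltimes\xi=\psi[+1]$; the paper instead reuses its Step 1 applied to $A=\Uh$. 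Both work; the paper's Step 1 is the more economical lemma since it serves existence \emph{and} uniqueness, while your argument trades it for the presentation theorem. One small imprecision: $\hzero\pi$ maps $\hzero A\cong\U$ to $\hzero{\Uh}$, which at that point is only known to be an \emph{extension} of $\U$ (lemma \ref{lem:Preliminaries/Uh/projection-map}), so you should compose with the canonical projection $\hzero{\Uh}\to\U$ before speaking of an endomorphism of $\U$; once you do, that endomorphism fixes the Chevalley generators and is therefore the identity outright, so the finite-dimensional-irreducibles argument from (ii) $\Rightarrow$ (iii) is not even needed.
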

We thus obtain, in view of theorem \ref{thm:CKM/b-triviality}, the following rigidity result for $\U$.
\begin{cor}
\label{cor:CKM/b-triviality}
	A $\hlie$-trivial formal deformation of the $\alie$-algebra $\U$ is also $\blie$-trivial, and admits a unique $\blie$-trivialization.
\end{cor}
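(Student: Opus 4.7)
Given an $\hlie$-trivial formal deformation $A$ of the $\alie$-algebra $\U$ with $\hlie$-trivialization $g : \U \hhh \to A$, the plan is to extract a regular colouring $\psi$ intrinsically from $A$, invoke the universal property of $\Uh$ (proposition \ref{prop:Preliminaries/Uh/universal}) to identify $A$ with $\Uh$, and finally recover $\psi$ uniquely from the PBW expansion of $X^+ X^-$ in $A$.

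For each $n \in \ZZ$, view $V_n := \Verma \hhh$ as an $A$-module via $a \cdot v := g^{-1}(a) \cdot v$. Because $A$ is an $\alie$-algebra, the structural generators $X^\pm$ act as weight-$\pm 2$ shifts on the weight basis $\{b_k\}$ of $V_n$: $X^-.b_k = \lambda_k(n) b_{k+1}$ and $X^+.b_k = \mu_k(n) b_{k-1}$ for unique $\lambda_k(n), \mu_k(n) \in \PSR$, with $\lambda_k(n) \equiv 1$ and $\mu_k(n) \equiv k(n-k+1) \mod h$ (since $g(X^\pm) \equiv X^\pm \mod h$, as $\hzero g$ is an $\alie$-algebra isomorphism). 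Rescaling to $\tilde b_k := \prod_{j<k} \lambda_j(n) \cdot b_k$ normalises $X^-.\tilde b_k = \tilde b_{k+1}$ and yields $X^+.\tilde b_k = \psi^k(n) \tilde b_{k-1}$ with $\psi^k(n) := \lambda_{k-1}(n) \mu_k(n)$; axiom (C1) is immediate. For (C2) and (C3), the key observation is that the pullback $V_n^{\mathrm{sub}} \subset V_n$ of the classical submodule $\Verma[-n-2] \subset \Verma$ (for $n \geq 0$) is automatically an $A$-submodule whose weight spaces sit only in weights $\leq -n-2$. Since $X^+ \cdot b_{n+1} \in V_n^{\mathrm{sub}}$ has weight $-n$, it must vanish, giving (C2); and the classical iso $\Verma[-n-2] \xrightarrow{\sim} V_n^{\mathrm{sub}}$, $b_k \mapsto b_{n+1+k}$, is automatically $A$-linear, which, after rescaling, becomes $\tilde b_k^{(-n-2)} \mapsto \alpha \cdot \tilde b_{n+1+k}^{(n)}$ for a constant $\alpha$ (constancy follows from the structural $X^-$ acting equivariantly, which forces $\lambda_k(-n-2) = \lambda_{n+1+k}(n)$), and comparing the $X^+$-action yields $\psi^{n+k+1}(n) = \psi^k(-n-2)$.

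With $\psi$ in hand, $V_n \cong \Vermah$ as $\Uha$-modules by construction, so the $\Uha$-action on every $\Vermah$ factorises through $A$. The structural map $\Uha \to A$ is surjective (by topological freeness of $A$ and surjectivity modulo $h$), and the universal property of $\Uh$ yields a surjective $\Uha$-algebra homomorphism $\pi : A \to \Uh$. The composite $\U \xrightarrow{\sim} \hzero A \xrightarrow{\hzero \pi} \hzero \Uh \to \U$ (last arrow from lemma \ref{lem:Preliminaries/Uh/projection-map}) is a surjective $\alie$-algebra endomorphism of $\U$, hence the identity; so $\hzero \pi$ is bijective, and topological freeness of both sides (lemma \ref{lem:Preliminaries/Uh/topologically-free}) lifts this to bijectivity of $\pi$. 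Regularity of $\psi$ then follows from theorem \ref{thm:CKM/Formal-deformations} applied to $A \cong \Uh$.

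For uniqueness, suppose $A$ is isomorphic, as a $\Uha$-algebra, to both $\Uh$ and $\Uh[\psi']$. The unique expansion $X^+ X^- = \sum_{a \geq 0} (X^-)^a (X^+)^a \xi^a(H)$ in the PBW topological basis of $A$ (only $a = b$ terms surviving by the $H$-grading) produces, via theorem \ref{thm:CKM/Generators-and-relations}, the regular solution of both $\psi \ltimes \xi = \psi[+1]$ and $\psi' \ltimes \xi = \psi'[+1]$; the recursion $\psi^1(n) = \xi^0(n)$, $\psi^{k+1}(n) = \sum_{a=0}^k (\prod_{b=k-a+1}^k \psi^b(n)) \xi^a(n-2k)$ then recovers $\psi$ uniquely from $\xi$, forcing $\psi = \psi'$. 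The main obstacle in this plan is the verification of (C2) and (C3): one must carefully untangle the interplay between the classically-inherited submodule structure of $V_n$ and the basis rescaling, which is designed to normalise only the structural $X^-$ (not $g(X^-)$).
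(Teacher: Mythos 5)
There is a genuine gap: your argument proves the wrong statement. What you establish is (essentially) theorem \ref{thm:CKM/Classification} --- that an $\hlie$-trivial formal deformation $A$ of the $\alie$-algebra $\U$ is isomorphic as a $\Uha$-algebra to $\Uh$ for a unique regular colouring $\psi$. That is indeed the first ingredient the paper uses, and your route to it (extract $\psi$ from the weight-space action on the pulled-back Verma modules, verify (C1)--(C3), invoke the universal property of $\Uh$, deduce regularity from theorem \ref{thm:CKM/Formal-deformations}) parallels the paper's Step 3 of that proof. But the corollary asserts something further: that $A$ is $\blie$-trivial, i.e.\ that there exists a $\PSR$-algebra isomorphism $g : \U \hhh \to A$ with $g(H)=H$ \emph{and} $g(X^-)=X^-$ on the nose, and that this $g$ is unique. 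Your closing ``uniqueness'' paragraph is about uniqueness of the colouring $\psi$, not uniqueness of a $\blie$-trivialization, and nowhere do you construct a map out of $\U\hhh$ fixing $X^-$.

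The missing content is exactly theorem \ref{thm:CKM/b-triviality}, which is not automatic once $A \cong \Uh$. For existence one must decide where $X^+$ goes: the paper solves $\Ncol \ltimes \xi = \psi$ (proposition \ref{prop:Eqh/Regular-solutions}, point 2, with $d=1$, using regularity of $\psi$), sends $X^+$ to $\sum_{a \geq 1}(X^-)^{a-1}(X^+)^a\xi^a(H)$ in $\Uh[\Ncol] \cong \U\hhh$, and checks via proposition \ref{prop:Eqh/Interpretation} and the universal property that this induces a surjection $\U\hhh \to \Uh$ fixing $H$ and $X^-$. For uniqueness one must show that the identity is the only $\PSR$-algebra endomorphism of $\U\hhh$ fixing both $H$ and $X^-$; the paper does this by showing any such endomorphism forces $X^+$ to act in the standard way on every $\Verma\hhh$ and then appeals to proposition \ref{prop:Preliminaries/Uh/perfect}. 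Neither step appears in your proposal, and neither is a formality. (A secondary issue: your verification of (C3) asserts that the classical embedding $\Verma[-n-2] \hookrightarrow \Verma$ is ``automatically $A$-linear'' after pullback, which needs justification; the paper instead deduces (C3) from the uniqueness, for each $n$, of the sequence $\varphi$ with $\Vermah[n,\varphi]$ factoring through $A$.)
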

\begin{proof}[of theorem \ref{thm:CKM/Classification}]
Let us adapt the definition of the representation $\Vermah$. Namely, for $n \in \ZZ$ and for $\varphi = (\varphi^k)_{k \geq 1}$ any sequence with values in $\PSR$, we denote by $\Vermah[n,\varphi]$ the representation of $\Uha$, whose underlying $\PSR$-module is $(\bigoplus_{k  \geq 0} \field b_k) \hhh$, and where the action of $\Uha$ is given by
\begin{equation*}
	\begin{aligned}
		H . b_k \, &= \, (n-2k) b_k, \\
		X^- . b_k \, &= \, b_{k+1}, \\
		X^+ . b_k \, &= \,
		\begin{cases}
			0& \text{if $k = 0$,} \\
			\varphi^k b_{k-1} & \text{if $k \geq 1$.}
		\end{cases}
	\end{aligned}
\end{equation*}
The proof of the theorem is in three steps.

\subsubsection*{\sc Step 1.} Let $A$ be a formal deformation of the $\alie$-algebra $\U$, and let $n \in \ZZ$. There is at most one sequence $\varphi$ with values in $\PSR$ such that the action of $\Uha$ on $\Vermah[n,\varphi]$ factorises through $A$.
\begin{proof}
Let $V(n)$ be the representation of $A$ topologically generated by $v$ and subject to the relations $H.v = nv$, $X^+.v = 0$, i.e.\ the representation $V(n)$ is the quotient of the left regular representation $A$ by the smallest closed (for the $h$-adic topology) subrepresentation containing $H - n$ and $X^+$. Let $\varphi$ be a sequence with values in $\PSR$ such that the action of $\Uha$ on $\Vermah[n,\varphi]$ factorises through $A$. We regard from now $\Vermah[n,\varphi]$ as a representation of $A$. By definition of $V(n)$, and since $\Vermah[n,\varphi]$ is Hausdorff (for the $h$-adic topology), there exists an $A$-morphism $f$ from $V(n)$ to $\Vermah[n,\varphi]$ such that $f(v) = b_0$. Let $v_0$ be the image of $v$ in $\hzero{V(n)}$. The representation $\hzero{V(n)}$ of $\hzero A$ is generated by $v_0$, and verifies $H.v_0 = nv_0$, $X^+.v_0 = 0$. On the other hand, the algebra $A$ being by assumption a formal deformation of the $\alie$-algebra $\U$, the monomials $(X^-)^a (X^+)^b H^c$ ($a,b,c \geq 0$) span $\hzero A$. It then follows that the vectors $(X^-)^a.v_0$ ($a \geq 0$) span $\hzero V$. As a consequence, $\hzero f$ sends a spanning subset to a basis. The map $\hzero f$ is therefore bijective. As $V(n)$ is Hausdorff and complete (for the $h$-adic topology), and since $\Vermah[n,\varphi]$ is topologically free, it follows that $f$ is bijective. Let us now suppose that there is another sequence $\varphi'$ with values in $\PSR$ such that the action of $\Uha$ on $\Vermah[n,\varphi']$ factorises through $A$. Then, as for $\varphi$, there is an $A$-isomorphism $f'$ from $V(n)$ to $\Vermah[n,\varphi']$ such that $f'(v) = b_0$. It follows that there is an $A$-isomorphism $g$ from $\Vermah[n,\varphi]$ to $\Vermah[n,\varphi']$ such that $g(b_0) = b_0$. As $g$ commutes with the action of $X^-$, $g(b_k) = b_k$ for all $k \geq 0$. Since $g$ also commutes with the action of $X^+$, it follows that $\varphi$ are $\varphi'$ are equal.
\end{proof}

\subsubsection*{\sc Step 2.} Let $A$ be a formal deformation of the $\alie$-algebra $\U$. If $\psi$ is a colouring such that the action of $\Uha$ on $\Vermah$ factorises through $A$ for all $n \in \ZZ$, then $A$ and $\Uh$ are isomorphic as $\Uha$-algebras.
\begin{proof}
Let $\psi$ be a colouring such that the action of $\Uha$ on $\Vermah$ factorises through $A$ for all $n \in \ZZ$. Then, by the universal property of $\Uh$ (proposition \ref{prop:Preliminaries/Uh/universal}), there exists a surjective $\Uha$-algebra homomorphism $f$ from $A$ to $\Uh$ (we recall that as $A$ is by assumption a formal deformation of the $\alie$-algebra $\U$, the structural homomorphism from $\Uha$ to $A$ is surjective). On the other hand, $\hzero A$ and $\U$ being isomorphic as $\alie$-algebras, there is an $\alie$-algebra homomorphism $g_0$ from $\hzero \Uh$ to $\hzero A$ (lemma \ref{lem:Preliminaries/Uh/projection-map}). Let us then consider the map $g_0 \circ \hzero{f}$. It is an $\alie$-algebra endomorphism of $\hzero A$. Therefore, $g_0 \circ \hzero{f}$ is equal to the identity map. This implies that $\hzero{f}$ is injective. Since $A$ is by assumption topologically free, and since $\Uh$ is as well (lemma \ref{lem:Preliminaries/Uh/topologically-free}), it follows that $f$ is injective, and thus bijective.
\end{proof}

\subsubsection*{\sc Step 3.} For every $\hlie$-trivial formal deformation $A$ of the $\alie$-algebra $\U$, there exists a regular colouring $\psi$ such that $A$ and $\Uh$ are isomorphic as $\Uha$-algebras.
\begin{proof}
Let $f: \U \hhh \to A$ be a $\hlie$-trivialization of $A$. We denote by $V(n)$ ($n \in \ZZ$) the pullback of the representation $\Verma \hhh$ by $f^{-1}$. We denote by $U(\pm 1)$ the subspace of $\U$ formed by the elements $x$ such that $[H,x] = \pm 2 x$. As $f(H) = H$, and in view of the relation $[H,X^\pm] = \pm 2 X^\pm$ in $A$, $f^{-1}(X^\pm)$ belongs to $U(\pm 1) \hhh$. This implies that in $V(n)$, $X^+.b_0 = 0$, $X^-.b_k = \alpha^k(n) b_{k+1}$ and $X^+.b_{k+1} = \beta^k(n) b_k$ for some $\alpha^k(n), \beta^k(n) \in \PSR$ ($k \geq 0$). Let us denote by $b'_k$ ($k \geq 0$) the vector $(X^-)^k.b_0$ in $V(n)$. As $\hzero f$ is by assumption an $\alie$-algebra homomorphism, and in view of definition \eqref{eq:Preliminaries/Colourings/Verma} of $\Verma$, $\alpha^k(n)$ and $\beta^k(n)$ are equal to $1$ and to $(k+1)(n-k)$, respectively, modulo $h$. This proves first that $b'_k$ is a non-zero scalar multiple of $b_k$ for all $k \geq 0$, and then that $V(n) = (\bigoplus_{k \geq 0} \field b'_k) \hhh$ as a $\PSR$-module, with
\begin{align*}
	H . b_k \,& = \, (n-2k) b_k \quad \text{(since $f(H) = H$)},\\
	X^- . b'_k \,& = \, b'_{k+1},\\
	X^+ . b'_k \,& = \,
	\begin{cases}
		0& \text{if $k = 0$,}\\
		\psi^k(n) b'_{k-1}& \text{if $k \geq 1$,}
	\end{cases}
\end{align*}
for $\psi^k(n) \in \PSR$ such that $\psi^k(n) = k(n-k+1)$ modulo $h$. If $n \geq 0$, then $\bigoplus_{k \geq n+1} \field b_k$ is a subrepresentation of $\Verma$, therefore $(\bigoplus_{k \geq n+1} \field b'_k) \hhh$ is a subrepresentation of $V(n)$. This implies that $\psi^{n+1}(n)$ has to be zero for every $n \geq 0$. It then follows from step 1 that $\psi^{n+k+1}(n) = \psi^k(-n-2)$ for all $k \geq 1$ and all $n \geq 0$. We thus have proved that the values $\psi^k(n)$ ($k \geq 1$, $n \in \ZZ$) define a colouring $\psi$, such that for all $n \in \ZZ$ the action of $\Uha$ on $\Vermah$ factorises through $A$. It follows from step 2 that $A$ and $\Uh$ are isomorphic as $\Uha$-algebras. In particular, $\Uh$ is a formal deformation of the $\alie$-algebra $\U$. Hence, the colouring $\psi$ is regular (theorem \ref{thm:CKM/Formal-deformations}).
\end{proof}

\subsubsection*{Conclusion.} Step 3 proves that every $\hlie$-trivial formal deformation of $\U$ is isomorphic as a $\Uha$-algebra to $\Uh$, for some regular colouring $\psi$. It remains to prove that for two regular colourings $\psi$ and $\psi'$, if $\Uh[\psi]$ and $\Uh[\psi']$ are isomorphic as $\Uha$-algebras, then $\psi = \psi'$. Let $n \in \ZZ$. By definition, the actions of $\Uha$ on the representations $\Vermah$ and $\Vermah[n,\psi']$ factorise through $\Uh$ and $\Uh[\psi']$, respectively. Let us suppose that $\Uh[\psi]$ and $\Uh[\psi']$ are isomorphic as $\Uha$-algebras. Then, the action of $\Uha$ on $\Vermah[n,\psi']$ also factorises through $\Uh$. As $\Uh$ is a formal deformation of the $\alie$-algebra $\U$ (theorem \ref{thm:CKM/Formal-deformations}), it follows from step 1 that $\psi^k(n) = (\psi')^k(n)$ for all $k \geq 1$.
\end{proof}

% ---------- Body of the document ---------- %

% ---------- Bibliography ---------- %

% ---------- Bibliography ---------- %

\affiliationone{
Alexandre Bouayad \\
Department of Pure Mathematics and Mathematical Statistics \\
University of Cambridge \\
Cambridge CB3 0WB \\
United Kingdom
\email{A.Bouayad@dpmms.cam.ac.uk}}
\end{document}